\documentclass{siamart190516}
\usepackage[T1]{fontenc}
\usepackage{graphicx} % Required for inserting images
\usepackage{amsfonts,verbatim,cite,subcaption,tikz,float,multicol,url,booktabs,mathabx}
\usepackage[section]{placeins}
\usepackage[algo2e,ruled,linesnumbered]{algorithm2e}

\newcommand{\Ttran}{\mathsf{T}}

\newcommand{\Htran}{\mathsf{H}}

\newcommand{\de}{\,\mathrm{d}}
\newcommand{\Tr}{\mathsf{Tr}}

\newcommand{\prob}{\mathsf{P}}
\newcommand{\expt}{\mathsf{E}}

\newcommand{\mi}{\!\cdot\!\mathsf{i}}
\newcommand{\fro}{\mathsf{F}}
\newcommand{\defi}{:=}
\newcommand{\van}{\mathsf{Van}}

\newcommand{\han}{\mathsf{Han}}
\newcommand{\normml}{\left\vert\kern-0.25ex\left\vert\kern-0.25ex\left\vert}  
\newcommand{\normmr}{\right\vert\kern-0.25ex\right\vert\kern-0.25ex\right\vert}
\newcommand{\order}{\mathcal{O}}
\newcommand{\rat}{\mathcal{R}}

\newcommand{\R}{\mathbb{R}}
\newcommand{\C}{\mathbb{C}}
\newcommand{\F}{\mathbb{F}}
\newcommand{\rk}{\mathrm{rank}}

\newcommand{\gap}{\mathsf{gap}}

\newcommand{\mpr}{\mathbf{u}}

\newcommand{\fig}{eps}

\newcommand{\poly}{\mathcal{P}}
\newcommand{\region}{\mathcal{D}}

\newcommand{\figsizeD}{0.45\textwidth}

\providecommand{\spa}[1]{\mathsf{span}\{#1\}}

\providecommand{\diagm}{\mathrm{diag}}
\providecommand{\diagM}[1]{\mathrm{diag}(#1)}
\providecommand{\abs}[1]{\lvert#1\rvert}
\providecommand{\norm}[1]{\lVert#1\rVert}

\providecommand{\Bigabs}[1]{\Bigl\lvert#1\Bigr\rvert}

\providecommand{\bignorm}[1]{\bigl\lVert#1\bigr\rVert}
\providecommand{\Bignorm}[1]{\Bigl\lVert#1\Bigr\rVert}

\newtheorem{remark}[theorem]{Remark}
\crefname{algocf}{Algorithm}{Algorithms}

\numberwithin{equation}{section}

\begin{document}
\title{Convergence analysis of a nonlinear eigensolver based on rational approximation of the resolvent
    }
    \author{
        Nian Shao\thanks{Institute of Mathematics, EPF Lausanne, 1015 Lausanne, Switzerland (\href{mailto:nian.shao@epfl.ch}{nian.shao@epfl.ch})}
        \and
        Yuji Nakatsukasa\thanks{Mathematical Institute, University of Oxford, Oxford, OX2 6GG, UK (\href{nakatsukasa@maths.ox.ac.uk}{nakatsukasa@maths.ox.ac.uk}).}
        }
    \headers{Eigensolver based on rational approximation of resolvent}{Nian~Shao and Yuji Nakatsukasa}
\maketitle
\begin{abstract}
Given a holomorphic matrix-valued function, the poles of its sketched resolvent are generically its eigenvalues. 
Once a good rational approximation of the sketched resolvent is obtained, the poles of this rational approximation typically lie close to those eigenvalues, thus providing a flexible framework for solving both linear and nonlinear eigenvalue problems.
However, the accuracy of the computed eigenvalues is limited and remains poorly understood. 
This paper analyzes the convergence of this approach and demonstrates the effectiveness of two techniques to improve accuracy: block probing and zooming in. 
We also establish the backward and forward stability of polefinding for a barycentric rational form via a generalized eigenproblem.
Numerical experiments demonstrate the sharpness of our theoretical results.
\end{abstract}
\begin{keywords}
    Nonlinear eigenvalue problem, rational approximation,
    pole/root finding.
    \end{keywords} 
    \begin{AMS}
        65H17, 41A20, 65F15
    \end{AMS}

\section{Introduction}
This work is concerned with nonlinear eigenvalue problems (NEPs) \cite{Mehrmann2004,Guttel2017} of the form
\begin{equation}\label{eq:mainproblem}
    T(\lambda) v = 0,\quad \lambda\in\region\quad\text{and}\quad v\in\C^{n}\setminus\{0\}.
\end{equation}
Throughout this paper, we assume that \(\region\subset\C\) is a nonempty domain enclosed by a Jordan curve, and $T$ is an $n\times n$ holomorphic matrix-valued function on some domain that contains the closure of $\region$. 
We also assume that the NEP~\cref{eq:mainproblem} is regular, that is, that  $\det T(z_{0})\neq 0$ for some $z_{0}\in\region$.
In this paper, we consider an approach that computes eigenvalues through the poles of rational approximations of the resolvent $T^{-1}$.
To our knowledge, such an approach was first proposed in \cite{Austin2015} for symmetric eigenvalue problems, and later developed in \cite{bruno2024evaluation} for the evaluation of resonances. A general discussion can be found in~\cite{nakatsukasa2025}. 

Let $\Omega_{L}$ and $\Omega_{R}$ be two $n\times b$ probing matrices, such as Gaussian random matrices. 
Our approach starts with the following surrogate function for the resolvent:
\begin{equation*}
  F(z) \defi \Omega_{L}^{\Htran}T(z)^{-1}\Omega_{R},\quad 
  z\in\region.
\end{equation*} 
For generic $\Omega_{L}$ and $\Omega_{R}$, the poles of the surrogate function $F$ are exact eigenvalues of $T$.
Let $R$ be a good rational approximation of $F$ in $\region$. Then we expect that the eigenvalues of $T$ are approximated by the poles of $R$. Once we obtain an approximate eigenvalue $\lambda$, the approximate eigenvector can be computed by SVD of $T(\lambda)$ or residual inverse iteration \cite{Neumaier1985}.

The main ingredient of our approach is computing a rational approximation. The AAA (adaptive Antoulas-Anderson) algorithm \cite{Nakatsukasa2018} provides an efficient tool for rational approximation, and has been employed in a number of applications \cite{nakatsukasa2025}.
The AAA algorithm basically computes a barycentric representation of the rational approximation by a greedy strategy. For a matrix-valued function, the set-valued AAA method \cite{Lietaert2022} computes a matrix-valued rational approximation in a similar approach.
Unlike existing contour integral-based approaches \cite{Beyn2012,Polizzi2009,Gavin2018,Sakurai2003,kressner2026linear}, which interpret the quadrature rule as a rational approximation of the indicator function of $\region$, our method directly approximates the resolvent itself and extracts the eigenvalues from its poles.
Moreover, our method differs from the nonlinear eigensolvers based on rational approximation proposed in \cite{GNT2022,Guttel2024,SEM2019,Guttel2014}, which approximate surrogate functions of $T$ instead of its resolvent $T^{-1}$.

Even though the (set-valued) AAA can compute a near-best rational approximation effectively, the accuracy of the (nonlinear) eigensolver is not always satisfactory.
Consider a toy example where $T(z)=\diagm\{0.1,\dotsc,0.9\}-zI_{9}$ and $\Omega_{L}$ and $\Omega_{R}$ are two complex Gaussian random vectors. 
We uniformly sample 50 points on the circle $\{z\in\C \mid \abs{z-1/2}=1/2\}$ and apply AAA to compute a rational approximation of the sketched resolvent, achieving an error of about $10^{-14}$ at the sample points. The real parts\footnote{The nonzero imaginary
parts result from the fact that AAA does not maintain real symmetry.} of the poles computed by AAA are listed below, where the incorrect digits are underlined:
\begin{equation*}
\begin{aligned}
  \widehat{\lambda}_{1} &= 0.10000000000000\underline{26}, \quad
  \widehat{\lambda}_{2} = 0.2000000000000\underline{690}, \quad
  \widehat{\lambda}_{3} = 0.2999999999\underline{865939}, \\
  \widehat{\lambda}_{4} &= 0.39999999999\underline{63088}, \quad
  \widehat{\lambda}_{5} = 0.5000000000\underline{123708}, \quad
  \widehat{\lambda}_{6} = 0.59999999999\underline{71500}, \\
  \widehat{\lambda}_{7} &= 0.69999999999\underline{76688}, \quad
  \widehat{\lambda}_{8} = 0.8000000000000\underline{127}, \quad
  \widehat{\lambda}_{9} = 0.90000000000000\underline{32}.
\end{aligned}
\end{equation*}
Repeating the procedure with different sketching vectors or increasing the number of samples yields similar results, indicating that even for well-separated eigenvalues, the naive approach is unable to compute them with high accuracy, especially for those interior eigenvalues.
Besides that, another unclear aspect of this polefinding approach is how to detect multiple eigenvalues. Even in symmetric eigenvalue problems, the original algorithms in \cite{bruno2024evaluation, Austin2015}, which employ probing vectors rather than probing matrices, cannot reveal the multiplicity of an eigenvalue. This limitation arises because the scalar-valued resolvent possesses only simple, first-order poles, even when the underlying eigenvalue has higher multiplicity.

All these limitations suggest that this approach requires certain modifications for reliable computation of eigenvalues.
In this paper, we focus on the following two effective strategies: zooming in and block probing.
The zooming-in technique, proposed in \cite{bruno2024evaluation}, addresses situations where too many eigenvalues are contained within the domain $\region$, and empirically also improves the accuracy. The idea is to recursively apply the polefinding procedure to dyadic subdivisions of $\region$, thereby reducing the number of eigenvalues in each subdomain.
The block probing strategy, which is already widely used in other algorithms for eigenvalue problems \cite{Guttel2024,Ikegami2010,kressner2024randomized}, replaces the probing vectors used in \cite{bruno2024evaluation,Austin2015} with probing matrices. This modification is expected to enable the detection of eigenvalue multiplicities and improve numerical robustness.
By leveraging the techniques of zooming in and block probing, the polefinding approach becomes significantly more stable, leading to the algorithm summarized in \cref{algo}. We remark that this method is essentially replacing the probing vectors in \cite[Algo.~2]{bruno2024evaluation} with probing matrices.
Applying this modified method to the same problem as above, with block size $2$ and $50$ sampling points taken on the circles $\{z\in\C\mid \abs{z-c}=1/6\}$ for $c=1/6$, $c=1/2$ and $c=5/6$, we obtain the real parts of the computed eigenvalues shown below, which exhibit at least $15$ digits of accuracy.
\begin{equation*}
\begin{aligned}
  \widehat{\lambda}_{1} &= 0.1000000000000000, \quad
  \widehat{\lambda}_{2} =  0.1999999999999999, \quad
  \widehat{\lambda}_{3} = 0.300000000000000\underline{3}, \\
  \widehat{\lambda}_{4} &= 0.400000000000000\underline{2}, \quad
  \widehat{\lambda}_{5} = 0.500000000000000\underline{2}, \quad
  \widehat{\lambda}_{6} = 0.600000000000000\underline{2}, \\
  \widehat{\lambda}_{7} &= 0.700000000000000\underline{2}, \quad
  \widehat{\lambda}_{8} = 0.7999999999999999, \quad
  \widehat{\lambda}_{9} = 0.9000000000000004.
\end{aligned}
\end{equation*}

\begin{algorithm2e}[htbp]
    \caption{Nonlinear eigensolver based on rational approximation of resolvent}
    \label{algo}
    \SetKwProg{Fn}{Function}{:}{}
    \KwIn{Matrix-valued function $T(\lambda)$, domain $\region$, block size $b$ of probing\;} 
    Draw two $n\times b$ Gaussian random matrices $\Omega_{L}$ and $\Omega_{R}$ in $\F$\;
    Construct a function handle $F(z) = \Omega_{L}^{\Htran}T(z)^{-1}\Omega_{R}$\tcp*[r]{Block probing}

    \Return{$\Lambda= \texttt{AdaPoleFinding} (F,\region)$}\;

    \Fn{$\texttt{AdaPoleFinding} (F,\region)$}{
      $\Lambda= \texttt{PoleFinding} (F,\region)$\;
      Partition $\region$ into small domains $\region_{i}$ and set $\Lambda_{i}=\Lambda\cap \region_{i}$\;
      \For{each unconverged $\Lambda_{i}$}{
        $\Lambda_{i}= \texttt{AdaPoleFinding} (F,\region_{i})$\tcp*[r]{Zooming in} 
      }
    }
    \Return{$\Lambda=\cup_{i}\Lambda_{i}$}\;

    \Fn{$\texttt{PoleFinding} (F,\region)$    }
    {
      Sample a sequence of points $\{z_{k}\}$ in $\region$\;
      Apply (set-valued) AAA to $\{F(z_{k}),z_{k}\}$ to find a rational approximation $R$ in a barycentric form\;
      Find roots $\Lambda$ of the denominator of $R$\;
    }
    \Return{$\Lambda$}\;
\end{algorithm2e}

From a theoretical perspective, we address three questions related to \cref{algo}:
\begin{enumerate}
  \item Can we recover all spectral information of $T$ from the surrogate function $F$?\label{Q1}
  \item Will the poles of the rational approximation $R$ in line~13 converge to those of the surrogate function $F$, given that $R$ provides a good approximation only at the sample points or $\partial\region$? \label{Q2}
  \item Is the pole/root finding procedure in line~14 stable? \label{Q3}
\end{enumerate}
In this paper, we answer all three questions in the affirmative, by analyzing  the effects of block probing and zooming in.
To the best of our knowledge, no prior analysis is available on the accuracy improvements that either of these techniques can bring to resolvent-based eigensolvers. Specifically, we show that block probing enables the recovery of spectral information even when $T$ possesses multiple eigenvalues, and that the combination of block probing and zooming in can significantly enhance the accuracy of the computed eigenvalues.

The rest of this paper is organized as follows.
We begin with Question~\ref{Q1} by studying the effect of sketching in \Cref{sec:sketching}.
When the block size $b$ is at least as large as the geometric multiplicity of an eigenvalue $\lambda$ of $T$, we show that the multiplicity information of $\lambda$ can be inferred from the asymptotic behavior of the singular values of $F(z)$ as $z$ approaches $\lambda$.
Moreover, \cref{thm:prob} shows that block probing can significantly reduce the probability of missing eigenvalues from sketching.

\Cref{sec:convergence} addresses Question~\ref{Q2} about the convergence of poles.
We first establish a general convergence result in \cref{thm:convPolesIn}, which shows that a naive approach may lead to inaccurate results.
We then justify the effectiveness of the zooming-in technique. On the one hand, we show that the accuracy of the poles improves linearly as the diameter of the computational domain decreases. On the other hand, this technique also reduces the number of poles in the domain. The main theoretical result of this section, \cref{thm:singlepole}, explains the substantial improvement in accuracy resulting from reducing the number of poles, particularly when combined with block probing.

Finally, Question~\ref{Q3} is discussed in \Cref{sec:stability}, where the backward and forward stability of finding the poles of $r$ via a generalized eigenvalue problem
under mild conditions 
are established in \cref{thm:bs} and \cref{thm:fs}, respectively.
Numerical experiments are also presented to illustrate the theoretical results, along with their sharpness in most cases, and the corresponding source code are publicly available at \url{https://github.com/nShao678/AAA-EVP-code}.

We remark that this work focuses on the theoretical analysis of \cref{algo}, and in particular 
does not provide an optimized implementation, which itself warrants independent investigation.
One challenge in this direction is the design of a reliable, and possibly tailored, \texttt{cleanup} procedure for removing spurious poles, also known as Froissart doublets, from the (set-valued) AAA algorithm.

\paragraph{Notation}
For a scalar function $f$ defined on $\region$, we denote its $L^{\infty}$-norm on $\region$ by $\norm{f}_{\region}$.
We denote its $k$th order derivative by $f^{(k)}$.
The boundary of $\region$ is denoted by $\partial \region$. The diameter of $\region$ is the maximal distance between any two points in $\region$.
For a matrix input, $\norm{\cdot}$ denotes the spectral norm, equal to the largest singular value. 
We say that $\Omega$ is a real Gaussian matrix if all its entries are independent and identically distributed (i.i.d.) according to the normal distribution $\mathcal{N}(0,1)$. Similarly, $\Omega$ is a complex Gaussian matrix if its real and imaginary parts take i.i.d. entries according to the distribution $\mathcal{N}(0,1/2)$.
The machine precision (or unit roundoff) is denoted by $\mpr$. The notation $\cdot^{\Ttran}$ and $\cdot^{\Htran}$ denote the transpose and Hermitian transpose, respectively. We denote the field by $\F$, where $\F=\R$ for real and $\F=\C$ for complex.
We use $a = \order(b)$ and $a = \Theta(b)$ to indicate $\abs{a} \leq c \abs{b}$ and $c_{1} \abs{b} \leq \abs{a} \leq c_{2} \abs{b}$ for some positive constants $c$, $c_{1}$, and $c_{2}$, respectively.
\section{Sketching of resolvent for nonlinear eigenvalue problems}
\label{sec:sketching}
We start with the classical Keldysh theorem \cite{keldysh1971completeness}, which provides a characterization of the resolvent.
\begin{theorem}[Keldysh]
    \label{thm:Keldysh}
    Let $\lambda_{1},\dotsc,\lambda_{s}$ be the distinct eigenvalues of $T$ in $\region$, and define 
    \begin{equation*}
        m\defi \sum_{i=1}^{s}m_{i},\quad \text{where}\quad m_{i}\defi\sum_{j=1}^{d_{i}}m_{ij},
    \end{equation*}
    where $m_{i,1}\geq m_{i,2}\geq \dotsb \geq m_{i,d_{i}}$  denote the  partial (algebraic) multiplicities of $\lambda_{i}$ for $i=1,\dotsc,s$.
    Then there are $n\times m$ matrices $V_{L}$ and $V_{R}$, and an $m\times m$ Jordan matrix $J$ with eigenvalues $\lambda_{i}$ of partial multiplicities $m_{ij}$, such that 
    \begin{equation*}
        T(z)^{-1}=V_{L}(zI-J)^{-1}V_{R}^{\Htran}+H(z)
    \end{equation*}
    for all $z\in\region\setminus\{\lambda_{1},\dotsc,\lambda_{s}\}$,
    where $H$ is a holomorphic function in $\region$, and 
    \begin{small}
    \begin{equation*}
        J=\begin{bmatrix}
            J_{1}&&\\ 
            &\ddots&\\ 
            &&J_{s}
        \end{bmatrix},
        \quad 
        J_{i}=\begin{bmatrix}
            J_{i,1}&&\\ 
            &\ddots&\\ 
            &&J_{i,d_{i}}
        \end{bmatrix},
        \quad J_{ij} = \begin{bmatrix}
            \lambda_{i} &1&&\\ 
            &\ddots&\ddots&\\
            &&\lambda_{i}& 1\\
            &&&\lambda_{i}
        \end{bmatrix}\in\C^{m_{ij}\times m_{ij}}.
    \end{equation*} 
    \end{small}
    Moreover, let 
\begin{equation*}
    V_{L}=[V_{L,1},\dotsc,V_{L,s}]\quad\text{and}\quad  V_{R}=[V_{R,1},\dotsc,V_{R,s}],\quad \text{where}\quad V_{L,i},V_{R,i}\in\C^{n\times m_{i}}.
\end{equation*}
Then $\rk(V_{L,i}),\rk(V_{R,i})\geq d_{i}$.
\end{theorem}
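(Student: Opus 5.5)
We plan to prove the statement locally at each eigenvalue and then sum the contributions. Because $\region$ is bounded and $T$ is regular, $\det T$ is a holomorphic function that does not vanish identically. Its zeros in $\overline{\region}$ are therefore isolated and finite in number; these are $\lambda_1,\dotsc,\lambda_s$. We fix one $\lambda_i$ and use the local Smith form for analytic matrix functions (Gohberg--Sigal). There are matrix functions $E(z)$ and $F(z)$, holomorphic and invertible in a neighborhood $U_i$ of $\lambda_i$, with
\begin{equation*}
T(z)=E(z)\,\diagm\{(z-\lambda_i)^{\kappa_1},\dotsc,(z-\lambda_i)^{\kappa_n}\}\,F(z).
\end{equation*}
The nonzero exponents $\kappa_j$ are exactly the partial multiplicities $m_{i1}\geq\dotsb\geq m_{i,d_i}$, and $d_i$ is the geometric multiplicity. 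Inverting gives $T(z)^{-1}=F(z)^{-1}\diagm\{(z-\lambda_i)^{-\kappa_j}\}E(z)^{-1}$, so $\lambda_i$ is a pole of order $m_{i1}$.

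Next we rewrite the principal part of this inverse in Jordan form. For each $j\le d_i$, let $f_j(z)$ be the $j$th column of $F(z)^{-1}$ and $e_j(z)^{\Htran}$ the $j$th row of $E(z)^{-1}$. The principal part of $(z-\lambda_i)^{-m_{ij}}f_j(z)e_j(z)^{\Htran}$ is obtained by Taylor-expanding $f_j$ and $e_j$ to order $m_{ij}-1$ and keeping the negative powers. It can be written as $X_{ij}(zI-J_{ij})^{-1}Y_{ij}^{\Htran}$, using that $(zI-J_{ij})^{-1}$ is the upper triangular Toeplitz matrix with entries $(z-\lambda_i)^{-k-1}$. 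One valid choice takes $X_{ij}=[f_j(\lambda_i),f_j'(\lambda_i),\dotsc,f_j^{(m_{ij}-1)}(\lambda_i)/(m_{ij}-1)!]$. The coefficients of $e_j$ are then arranged in reverse order into $Y_{ij}$, so that the Toeplitz product reproduces the Cauchy-product coefficients. This bookkeeping is the main technical step; it is the standard construction of Jordan chains. We check it by comparing the coefficient of $(z-\lambda_i)^{-k}$ on both sides. Collecting the blocks, $V_{L,i}=[X_{i1},\dotsc,X_{i,d_i}]$ and $V_{R,i}=[Y_{i1},\dotsc,Y_{i,d_i}]$ give the principal part at $\lambda_i$ as $V_{L,i}(zI-J_i)^{-1}V_{R,i}^{\Htran}$.

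The global statement follows. We set $V_L=[V_{L,1},\dotsc,V_{L,s}]$, $V_R=[V_{R,1},\dotsc,V_{R,s}]$ and $J=\diagm\{J_1,\dotsc,J_s\}$, and define $H(z)=T(z)^{-1}-V_L(zI-J)^{-1}V_R^{\Htran}$. Near $\lambda_i$, the term for $\lambda_i$ cancels the principal part of $T^{-1}$, and the terms for $\lambda_k$ with $k\ne i$ are holomorphic there. Away from the eigenvalues both summands are holomorphic. So $H$ has only removable singularities and extends holomorphically to $\region$.

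For the rank bound, the first columns of the $X_{ij}$ are $f_1(\lambda_i),\dotsc,f_{d_i}(\lambda_i)$. These are distinct columns of the invertible matrix $F(\lambda_i)^{-1}$, hence linearly independent, so $\rk(V_{L,i})\ge d_i$. Under the reversed ordering, the last columns of the $Y_{ij}$ are $e_j(\lambda_i)$, which are distinct rows of the invertible matrix $E(\lambda_i)^{-1}$ (up to conjugation), so $\rk(V_{R,i})\ge d_i$.

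The main obstacle is twofold. First, the existence of the local Smith form must be justified; we will cite it from Gohberg--Sigal rather than reprove it. Second, the index bookkeeping that turns the scalar principal parts into the Jordan-resolvent form needs care.
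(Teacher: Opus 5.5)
The paper gives no proof of this statement. It quotes Keldysh's theorem from \cite{keldysh1971completeness} and points to \cite{Guttel2017} for details. Your argument therefore supplies something the paper takes as given, and it is correct. The local Smith form is the standard route, and your coefficient bookkeeping checks out. Expand $f_{j}(z)=\sum_{k\geq 0}f_{j,k}(z-\lambda_{i})^{k}$ and the $j$th row of $E(z)^{-1}$ as $g_{j}(z)^{\Ttran}=\sum_{k\geq 0}g_{j,k}^{\Ttran}(z-\lambda_{i})^{k}$. The coefficient of $(z-\lambda_{i})^{-k}$ in the principal part is then $\sum_{a+b=m_{ij}-k}f_{j,a}g_{j,b}^{\Ttran}$. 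This is exactly $X_{ij}(J_{ij}-\lambda_{i}I)^{k-1}Y_{ij}^{\Htran}$ with $X_{ij}=[f_{j,0},\dotsc,f_{j,m_{ij}-1}]$ and $Y_{ij}=[\overline{g_{j,m_{ij}-1}},\dotsc,\overline{g_{j,0}}]$, and it coincides with the paper's later formula for $S_{ijk}$.

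One notational point should be fixed. Do not call the row $e_{j}(z)^{\Htran}$ and then Taylor-expand $e_{j}$, since $e_{j}$ would be antiholomorphic. Expand the holomorphic row and conjugate its coefficients when forming $Y_{ij}$, as above.

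It is also worth stating what your construction gives beyond existence. The cited theorem is invoked in the paper (its remark after the statement, and the expansion of $S_{ijk}$) for more than existence: the columns of $V_{L,i}$ and $V_{R,i}$ are right and left Jordan chains. Your construction delivers this directly. From $T(z)f_{j}(z)=(z-\lambda_{i})^{m_{ij}}E(z)\mathbf{e}_{j}$ and $g_{j}(z)^{\Ttran}T(z)=(z-\lambda_{i})^{m_{ij}}\mathbf{e}_{j}^{\Ttran}F(z)$, where $F$ is your Smith factor, $f_{j}$ and $g_{j}$ are right and left root functions of order $m_{ij}$. Their Taylor coefficients are therefore Jordan chains with linearly independent leading vectors, which is also where your rank bound $\rk(V_{L,i}),\rk(V_{R,i})\geq d_{i}$ comes from.
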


\begin{remark}
    In \cref{thm:Keldysh}, $m_{i}$ and $d_{i}$ are called the algebraic and geometric multiplicity of $\lambda_{i}$, respectively.
    The matrices $V_{L}$ and $V_{R}$ could be wide matrices, that is, $m>n$.
    The columns of matrices $V_{L,i}$ and $V_{R,i}$ are called generalized eigenvectors, which consist of Jordan chains of each eigenvalue. We refer to \cite[Sec.~2]{Guttel2017} for details.
\end{remark}

\subsection{Preserving spectral information by block probing}

Let $\Omega_{L}$ and $\Omega_{R}\in\F^{n\times b}$ be two probing matrices, such as Gaussian random matrices. As mentioned above, the surrogate function for the resolvent is constructed as 
\begin{equation}
    \label{eq:defF}
    F(z) \defi \Omega_{L}^{\Htran}T(z)^{-1}\Omega_{R} = \Omega_{L}^{\Htran}V_{L}(z I-J)^{-1}V_{R}^{\Htran}\Omega_{R}+H_{\Omega}(z),
\end{equation}
where $H_{\Omega}(z)=\Omega_{L}^{\Htran}H(z)\Omega_{R}$ is holomorphic in $\region$.
When $m_{ij}=1$ for all $1\leq i\leq s$ and $1\leq j\leq d_{i}$, that is, $J$ is diagonal, then all eigenvalues $\lambda$ of $T$ in $\mathcal{D}$ are non-defective, and would be equal to the poles of $F$ for generic probing matrices, that is, all $n\times b$ matrices except on a set of measure zero.
The following theorem generalizes this result to the defective setting, which is related to, but different from \cite[Thm.~3.3]{Beyn2012}.
Here we provide the asymptotic growth of the singular values when $z$ is close to certain eigenvalues of $T$, whereas \cite[Thm.~3.3]{Beyn2012} focuses primarily on the structure of the canonical system of generalized eigenvectors.
\begin{theorem}
    \label{thm:sval}
    Consider the matrix-valued function $T$ in \cref{thm:Keldysh} and the surrogate function $F$ in \cref{eq:defF}.
    Assume that the block size of the probing matrices $\Omega_{L}$ and $\Omega_{R}$ satisfies $b\geq d_{i}$, where $d_{i}$ is the geometric multiplicity of $\lambda_{i}$.
    Denote the largest $d_{i}$ singular values of $F(\lambda_{i}+\epsilon)$ by $\sigma_{j}(\epsilon)$ for $1\leq j\leq d_{i}$.
    For generic probing matrices, when $\epsilon\to 0$, it holds that $\sigma_{j}(\epsilon)^{-1}=\Theta(\epsilon^{m_{ij}})$.
\end{theorem}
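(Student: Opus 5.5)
The plan is to localize \cref{thm:Keldysh} to a single eigenvalue and read off the leading singular behaviour chain by chain. Fix $i$ and set $z=\lambda_i+\epsilon$. All blocks $J_{i'}$ with $i'\neq i$ and the term $H_\Omega$ stay bounded as $\epsilon\to0$. Hence
\[
F(\lambda_i+\epsilon)=\sum_{j=1}^{d_i}\Omega_L^{\Htran}V_{L,ij}(\epsilon I-N_{ij})^{-1}V_{R,ij}^{\Htran}\Omega_R+\order(1),
\]
where $N_{ij}$ is the nilpotent shift of size $m_{ij}$ and $V_{L,ij},V_{R,ij}$ are the columns belonging to the $j$th Jordan chain. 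I will use the expansion $(\epsilon I-N)^{-1}=\sum_{l=0}^{k-1}\epsilon^{-l-1}N^{l}$ with $k=m_{ij}$. The $j$th chain then contributes $\epsilon^{-m_{ij}}S_{ij}(\epsilon)$, where $S_{ij}(\epsilon)$ is a polynomial in $\epsilon$ and $S_{ij}(0)=a_jc_j^{\Htran}$ is rank one. Here $a_j=\Omega_L^{\Htran}v^{L}_{ij}$ and $c_j=\Omega_R^{\Htran}v^{R}_{ij}$, with $v^{L}_{ij}$ the first column of $V_{L,ij}$ and $v^{R}_{ij}$ the last column of $V_{R,ij}$, i.e.\ the (left/right) eigenvector ends of the chains.

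The genericity input comes next. By the rank statement $\rk(V_{L,i}),\rk(V_{R,i})\ge d_i$ in \cref{thm:Keldysh}, in a canonical system the eigenvector ends $v^{L}_{i1},\dotsc,v^{L}_{id_i}$ are linearly independent, and likewise $v^{R}_{i1},\dotsc,v^{R}_{id_i}$. If I cannot cite this directly, I will argue from the normalization of Keldysh's canonical system. Since $b\ge d_i$, for generic $\Omega_L,\Omega_R$ the $b\times d_i$ matrices $A=[a_1,\dotsc,a_{d_i}]$ and $C=[c_1,\dotsc,c_{d_i}]$ have full column rank, because the failure set is a proper algebraic variety. More generally I will require that every $k\times k$ minor determinant used below is a nonzero polynomial in the entries of $\Omega$, so that genericity excludes only a measure-zero set.

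The two-sided estimate will then be proved separately.

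\emph{Upper bound.} Removing the rank-one leading parts $\epsilon^{-m_{il}}a_lc_l^{\Htran}$ for $l<j$ is a rank-$(j-1)$ perturbation. The remainder consists of $\order(\epsilon^{-m_{il}+1})$ terms for $l<j$, $\order(\epsilon^{-m_{il}})$ terms for $l\ge j$, and $\order(1)$. By Eckart--Young, $\sigma_j(F)$ is at most the norm of this remainder. To get the exact rate $\order(\epsilon^{-m_{ij}})$ despite the $\epsilon^{-m_{il}+1}$ terms, I will instead subtract the whole rank-$(j-1)$ matrix spanned by the chain contributions with $l<j$. This needs each chain's range to be treated as a subspace; it may require replacing rank one by the rank of the chain blocks and checking the count, which is the delicate point.

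\emph{Lower bound.} I will use the min--max characterization with test matrices $W,Z\in\F^{b\times j}$ chosen so that $W^{\Htran}A$ and $C^{\Htran}Z$ isolate the first $j$ chains. Then $\sigma_j(F)\ge\sigma_{\min}(W^{\Htran}FZ)$, and it remains to show that this $j\times j$ matrix has smallest singular value $\gtrsim\epsilon^{-m_{ij}}$. Writing it as $D(\epsilon)^{1/2}G(\epsilon)D(\epsilon)^{1/2}$ with $D=\diagM{\epsilon^{-m_{i1}},\dotsc,\epsilon^{-m_{ij}}}$ and $G(0)$ generically invertible would finish, since the smallest entry of $D$ is $\epsilon^{-m_{ij}}$.

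The main obstacle is this graded-scaling step. The lower-order terms of a long chain, e.g.\ $\epsilon^{-m_{i1}+1}$, can dominate the leading term $\epsilon^{-m_{ij}}$ of a shorter chain, so a naive Weyl perturbation fails and the scaled matrix $G(\epsilon)$ need not be bounded. My first attempt will be a Schur-complement induction on $j$. Eliminating the dominant chains in order of decreasing $m_{il}$, I will show the Schur complement associated with chain $j$ equals $\epsilon^{-m_{ij}}(c+o(1))$ with $c\neq0$ for generic $\Omega$. I will verify this by tracking determinants of leading $k\times k$ compressions, whose leading coefficient $\epsilon^{-(m_{i1}+\dotsb+m_{ik})}\det(\cdot)$ is a nonzero polynomial in $\Omega$. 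The ratio of successive determinants then gives $\sigma_k\sim\epsilon^{-m_{ik}}$.
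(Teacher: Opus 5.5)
Your localization and genericity steps are fine. One caveat there: the linear independence of the eigenvector ends must come from the canonical-system property, as you suspect, because $\rk(V_{L,i})\geq d_i$ alone does not give it.

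However, neither bound is actually established.

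\textbf{Upper bound.} Subtracting the full chain contributions with $l<j$ breaks the rank count. Each $\Omega_L^{\Htran}V_{L,il}(\epsilon I-N_{il})^{-1}V_{R,il}^{\Htran}\Omega_R$ generally has rank larger than one, up to $\min(b,m_{il})$. Eckart--Young then bounds $\sigma_{r+1}$ with $r$ as large as $\sum_{l<j}\min(b,m_{il})$, not $\sigma_j$. You are missing the fact the paper's proof rests on: $(\epsilon I-N)^{-1}$ has exactly one unbounded singular value. For $\abs{\epsilon}\leq 1$, all its singular values are at least $1/\norm{\epsilon I-N}\geq 1/2$, their product is $\abs{\epsilon}^{-m}$, and the largest is $\Theta(\abs{\epsilon}^{-m})$. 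Hence the $j$th singular value of the block-diagonal $(\epsilon I-N_i)^{-1}$ is $\Theta(\abs{\epsilon}^{-m_{ij}})$ for $j\leq d_i$. Then $\sigma_j(XYZ)\leq\norm{X}\sigma_j(Y)\norm{Z}$ together with Weyl's inequality gives the upper bound at once.

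\textbf{Lower bound.} Your determinant induction assumes what must be proved. In the expansion $\sum_p\epsilon^{-p}M_p$, determinants of $k\times k$ compressions collect cross terms involving generalized eigenvectors. You would have to show two things: every coefficient above order $\epsilon^{-(m_{i1}+\dotsb+m_{ik})}$ cancels, and the coefficient at that order is a nonzero polynomial in $\Omega$. For example, with partial multiplicities $(3,1)$ and $k=2$, the $\epsilon^{-5}$ coefficient is the mixed term of $M_3$ and $M_2$. It vanishes only because of the Jordan-chain structure of $M_2$, and nothing in your plan accounts for this.

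The idea that closes both gaps is to absorb the lower-order chain terms into $\epsilon$-dependent vectors rather than treating them as perturbations. For a single block of size $m$ there is the exact identity
\[
(\epsilon I-N)^{-1}=\epsilon^{-m}u(\epsilon)v(\epsilon)^{\Ttran}-L(\epsilon),\qquad u(\epsilon)=[1,\epsilon,\dotsc,\epsilon^{m-1}]^{\Ttran},\quad v(\epsilon)=[\epsilon^{m-1},\dotsc,\epsilon,1]^{\Ttran}.
\]
Here $L(\epsilon)$ is strictly lower triangular with entries $\epsilon^{p-q-1}$, hence $\order(1)$ for $\abs{\epsilon}\leq 1$. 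Consequently
\[
F(\lambda_i+\epsilon)=\widetilde A(\epsilon)\,\diagM{\epsilon^{-m_{i1}},\dotsc,\epsilon^{-m_{id_i}}}\,\widetilde C(\epsilon)^{\Htran}+\order(1).
\]
The $l$th column of $\widetilde A(\epsilon)$ is $\Omega_L^{\Htran}V_{L,il}u(\epsilon)$, and that of $\widetilde C(\epsilon)$ is $\Omega_R^{\Htran}V_{R,il}\overline{v(\epsilon)}$, with $u,v$ of length $m_{il}$. In particular $\widetilde A(0)=A$ and $\widetilde C(0)=C$, so both keep full column rank for small $\abs{\epsilon}$.

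Now use $\sigma_{\min}(X)\sigma_j(Y)\sigma_{\min}(Z)\leq\sigma_j(XYZ)\leq\norm{X}\sigma_j(Y)\norm{Z}$, valid for $X$ of full column rank and $Z$ of full row rank. Together with Weyl's inequality this gives $\sigma_j(F)=\Theta(\abs{\epsilon}^{-m_{ij}})$ directly, without min--max test matrices or Schur complements. The same decomposition also repairs your Eckart--Young idea: subtracting $\epsilon^{-m_{il}}\widetilde a_l(\epsilon)\widetilde c_l(\epsilon)^{\Htran}$ for $l<j$ is a genuine rank-$(j-1)$ perturbation and leaves an $\order(\abs{\epsilon}^{-m_{ij}})$ remainder.

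This also makes rigorous the lower bound, which the paper's proof only asserts (``generic probing matrices do not cancel the blow-up'').
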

\begin{proof}
    Note that 
    \begin{equation*}
            F(z) = \sum_{i=1}^{s}\Omega_{L}^{\Htran}V_{L,i}(zI-J_{i})^{-1}V_{R,i}^{\Htran}\Omega_{R}+H_{\Omega}(z).
    \end{equation*}
    Since $H_{\Omega}$ is holomorphic and the geometric multiplicity of $\lambda_{i}$ is $d_{i}$, we know that $F(z)$ has at most $d_{i}$ singular values blowing up to $\infty$ as $z\to\lambda_{i}$. 
    For generic probing matrices in $\F^{n\times b}$ with $b\geq d_{i}$, both $\Omega_{L}^{\Htran}V_{L,i}$ and $\Omega_{R}^{\Htran}V_{R,i}\in\F^{b\times m_{i}}$ are two generic matrices of rank at least $d_{i}$ because both $V_{L,i}$ and $V_{R,i}\in\F^{n\times m_{i}}$ of rank at least $d_{i}$.
    Therefore, such generic probing matrices do not cancel the blow-up as $z\to\lambda_{i}$, and in turn, $F(z)$ has exactly $d_{i}$ singular values blowing up as $z\to\lambda_{i}$.

    To find its asymptotic rate, it is sufficient to look into the singular values of $\bigl((\lambda_{i}+\epsilon)I_{m_{i}}-J_{i}\bigr)^{-1}$, which consist of singular values of $\bigl((\lambda_{i}+\epsilon)I_{m_{ij}}-J_{ij}\bigr)^{-1}$ for $1\leq j\leq d_{i}$.
    Since $\lambda_{i} I_{m_{ij}}-J_{ij}$ has a one-dimensional null space, only the largest singular value of its inverse blows up. 
    Noting that 
    \begin{equation*}
        \bigl((\lambda_{i}+\epsilon) I_{m_{ij}}-J_{ij}\bigr)^{-1} = \epsilon^{-1}\bigl(I_{m_{ij}}-(J_{ij}-\lambda_{i}I_{m_{ij}})/\epsilon\bigr)^{-1} = \sum_{k=1}^{m_{ij}}\frac{1}{\epsilon^{k}}(J_{ij}-\lambda_{i} I_{m_{ij}})^{k-1},
    \end{equation*}
    we know that $\bignorm{\bigl((\lambda_{i}+\epsilon)I_{m_{ij}}-J_{ij}\bigr)^{-1}}^{-1}=\Theta(\epsilon^{m_{ij}})$. Then the theorem is proved by the monotonicity of $m_{ij}$.
\end{proof}

\cref{thm:sval} stands as the theoretical foundation of \cref{algo}, showing that the surrogate function $F$ keeps all essential information of the spectrum of $T$ if the block size $b$ is at least the same as the maximum geometric multiplicities. One can find all eigenvalues of $T$ by finding the poles of $F$ and detecting the partial multiplicities from the asymptotic behavior of the growth of its singular values. As an example, consider the following matrix 
\begin{equation}\label{eq:2example}
    J = \mathsf{blkdiag}\Biggl\{\begin{bmatrix}
        2&1&\\ 
        &2&1\\ 
        &&2
    \end{bmatrix},\begin{bmatrix}
        2&1&\\ 
        &2&1\\ 
        &&2
    \end{bmatrix},\begin{bmatrix}
        2&1\\ 
        &2
    \end{bmatrix},2,2\Biggr\}\in\R^{10\times 10}
\end{equation}
containing an eigenvalue $2$ with geometric multiplicity $5$, and partial multiplicities  $m_{1,1}=m_{1,2} = 3$, $m_{1,3}=2$, $m_{1,4}=m_{1,5}=1$.
Sketching with complex Gaussian random matrices of size $10\times 6$, we collect numerical results in \cref{fig:multi}. We see indeed that the first five singular values of $\Omega_{L}^{\Htran}(J-(2+\epsilon)I)^{-1}\Omega_{R}$ blow up like $\epsilon^{-3}$, $\epsilon^{-3}$, $\epsilon^{-2}$, $\epsilon^{-1}$ and $\epsilon^{-1}$, respectively, reflecting  the theoretical results in \cref{thm:sval}.
\begin{figure}[htbp]
    \centering
    \includegraphics[width=0.5\textwidth]{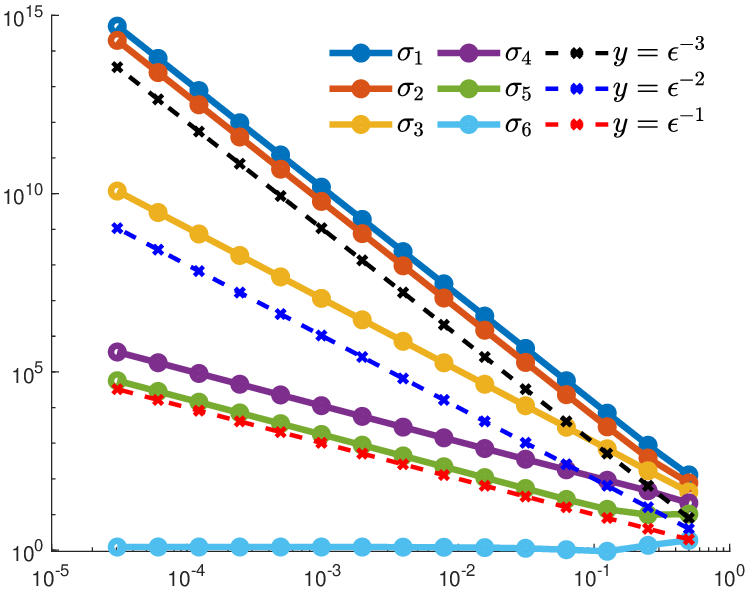}
    \caption{Asymptotic growth of singular values for $\Omega_{L}^{\Htran}(J-(2+\epsilon)I)^{-1}\Omega_{R}$, for the $J$ in~\cref{eq:2example}. 
    The $x$-axis is $\epsilon$.}
    \label{fig:multi}    
\end{figure}

\subsection{Robustness from block probing}

Besides including information about the multiplicity and defectivity of eigenvalues, using probing matrices with large block size can also improve the robustness. We first further partition  $V_{L,i}$ and $V_{R,i}$ in \cref{thm:Keldysh} as  
\begin{equation*}
    V_{L,i} = [V_{L,i,1},\dotsc,V_{L,i,d_{i}}]\quad\text{and}\quad 
    V_{R,i} = [V_{R,i,1},\dotsc,V_{R,i,d_{i}}],
\end{equation*}
where $V_{L,ij},V_{R,ij}\in\C^{n\times m_{ij}}$.
Then by \cref{thm:Keldysh}, we have 
\begin{equation*}
    T(z)^{-1} = \sum_{i=1}^{s}\sum_{j=1}^{d_{i}}V_{L,ij}(zI-J_{ij})^{-1}V_{R,ij}^{\Htran}+H(z) = \sum_{i=1}^{s}\sum_{j=1}^{d_{i}}\sum_{k=1}^{m_{ij}}S_{ijk}(z-
    \lambda_{i})^{-k}+H(z),
\end{equation*}
where, letting $V_{L,ij}=[v_{L,ij}^{0},v_{L,ij}^{1},\dotsc,v_{L,ij}^{m_{ij}-1}]$ and $V_{R,ij}=[v_{R,ij}^{m_{ij}-1},\dotsc,v_{R,ij}^{1},v_{R,ij}^{0}]\in\C^{n\times m_{ij}}$,
\begin{equation*}
    S_{ijk}=\sum_{\ell=0}^{m_{ij}-k}v_{L,ij}^{\ell}\bigl(v_{R,ij}^{m_{ij}-k-\ell}\bigr)^{\Htran}\in\C^{n\times n}.
\end{equation*}
After sketching in \cref{eq:defF}, we obtain 
\begin{equation}
    \label{eq:defS}
    \begin{aligned}
        F(z) &= \Omega_{L}^{\Htran}T(z)^{-1}\Omega_{R} = \sum_{i=1}^{s}\sum_{j=1}^{d_{i}}\sum_{k=1}^{m_{ij}}\Omega_{L}^{\Htran}S_{ijk}\Omega_{R}(z-\lambda_{i})^{-k}+H_{\Omega}(z)\\ 
        &= \sum_{i=1}^{s}\sum_{k=1}^{m_{i,1}}\Omega_{L}^{\Htran}S_{ik}\Omega_{R}(z-\lambda_{i})^{-k}+H_{\Omega}(z),\quad\text{where}\quad S_{ik}=\sum_{\substack{1\leq j\leq d_{i}\\ k\leq m_{ij}}}S_{ijk}.
    \end{aligned}
\end{equation}

In order to obtain a robust surrogate function, 
we seek to ensure that no eigenvalue is missed due to sketching. A sufficient condition is that every sketched coefficient $\Omega_{L}^{\Htran}S_{ik}\Omega_{R}$ deviates from zero. To show this can be done robustly by block probing, we first prove the following lemma.

\begin{lemma}
    \label{lem:prob}
    Let $S$ be a nonzero matrix in $\F$, and $\omega_{L}$ and $\omega_{R}$ be two i.i.d. Gaussian random vectors in $\F$. For sufficiently small $\epsilon>0$, it holds that 
    \begin{equation*}
        \begin{aligned}
            \prob(\abs{\omega_{L}^{\Htran}S\omega_{R}}&\leq \epsilon\norm{S}) =\order( \epsilon^{\ell}\abs{\log\epsilon}),        
        \end{aligned}
    \end{equation*} 
    where $\ell=1$ if $\F=\R$ and $\ell=2$ if $\F=\C$. Here the hidden constant in $\order(\cdot)$ is universal.
\end{lemma}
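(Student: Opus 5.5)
Our plan is to reduce the bilinear form to a weighted sum of products of independent Gaussians via the SVD, and then control the small-ball probability of a single product. Write $S=U\Sigma W^{\Htran}$ with singular values $\sigma_{1}=\norm{S}\geq\sigma_{2}\geq\dotsb$. By unitary (orthogonal) invariance of Gaussian vectors in $\F$, $x\defi U^{\Htran}\omega_{L}$ and $y\defi W^{\Htran}\omega_{R}$ are again independent standard Gaussian vectors, and
\begin{equation*}
  \omega_{L}^{\Htran}S\omega_{R}=\sum_{k}\sigma_{k}\,\overline{x_{k}}\,y_{k}=\sigma_{1}\overline{x_{1}}y_{1}+Z,
\end{equation*}
where $Z$ depends only on $(x_{k},y_{k})_{k\geq 2}$ and is therefore independent of $(x_{1},y_{1})$. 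After dividing by $\norm{S}=\sigma_{1}$, the event becomes $\abs{\overline{x_{1}}y_{1}+Z/\sigma_{1}}\leq\epsilon$.

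Next we condition on $Z$ and on $x_{1}$. Given these, $\overline{x_{1}}y_{1}$ is a Gaussian in $\F$ with variance $\abs{x_{1}}^{2}$, shifted by the constant $c=Z/\sigma_{1}$. Since a centered Gaussian density is maximized at its center, the probability that a Gaussian in $\F$ with scale $\abs{x_{1}}$ lies in a ball of radius $\epsilon$ around $-c$ is at most $\min\{1,C(\epsilon/\abs{x_{1}})^{\ell}\}$, with $\ell=1$ for the real interval case and $\ell=2$ for the complex disk case. This bound holds uniformly in $c$, so $Z$ drops out entirely. The problem therefore reduces to bounding
\begin{equation*}
  \expt\bigl[\min\{1,C\epsilon^{\ell}\abs{x_{1}}^{-\ell}\}\bigr]
\end{equation*}
for a single standard Gaussian $x_{1}$ in $\F$.

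The main computation is this expectation, and it is where the logarithm appears. We split at $\abs{x_{1}}=\epsilon$. On $\{\abs{x_{1}}\leq\epsilon\}$ we bound the integrand by $1$; this region has probability $\order(\epsilon^{\ell})$, because the density of $\abs{x_{1}}$ near $0$ is bounded in the real case and is $\order(r)$ in the complex case. On $\{\abs{x_{1}}>\epsilon\}$ we compute $C\epsilon^{\ell}\,\expt[\abs{x_{1}}^{-\ell}\mathbf 1_{\abs{x_{1}}>\epsilon}]$. Here $\abs{x_{1}}^{-\ell}$ against the radial density $r^{\ell-1}e^{-c r^{2}}$ gives an integrand of order $1/r$, so this piece equals
\begin{equation*}
  \order\Bigl(\epsilon^{\ell}\int_{\epsilon}^{\infty}r^{-1}e^{-cr^{2}}\de r\Bigr)=\order(\epsilon^{\ell}\abs{\log\epsilon})\quad\text{for small }\epsilon.
\end{equation*}
Combining the two regions gives $\order(\epsilon^{\ell}\abs{\log\epsilon})$ with constants depending only on the Gaussian density. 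The constants are therefore universal: independent of $S$, its rank and the dimension.

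The only delicate point is the uniformity in the shift $c$. It is handled by the fact that a Gaussian has its small-ball probability maximized at its center (Anderson's inequality, or directly the bound of the density by its peak value). Using this, we never need any information about the distribution of $Z$.
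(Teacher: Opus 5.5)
Your proof is correct, but it takes a different route from the paper's.

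\textbf{The paper's route.} It also reduces, via the SVD, to $S$ diagonal with leading entry $\norm{S}=1$. It then conditions on the whole vector $\omega_{R}$. Given $\omega_{R}$, the form $\omega_{L}^{\Htran}S\omega_{R}$ is a \emph{centered} Gaussian with variance $\norm{S\omega_{R}}^{2}\geq\abs{\widetilde{\omega}_{R}}^{2}$. A monotonicity-in-variance argument then dominates the probability by that of the scalar product $\widetilde{\omega}_{L}\widetilde{\omega}_{R}$. For $\F=\C$ this uses the closed-form Rayleigh law. For $\F=\R$ it is valid only when $\abs{\widetilde{\omega}_{R}}\geq\epsilon$, and the remaining event costs an extra $\prob(\abs{\widetilde{\omega}_{R}}\leq\epsilon)$. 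The $\abs{\log\epsilon}$ is obtained by quoting the $K_{0}$ Bessel-function density of a product of two Gaussians and its behaviour at the origin.

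\textbf{Your route.} You condition instead on $x_{1}$ and on the independent remainder $Z$, which leaves a \emph{shifted} Gaussian. You remove the shift with the uniform peak-density (Anderson) bound. The logarithm then comes from the elementary integral $\epsilon^{\ell}\int_{\epsilon}^{\infty}r^{-1}e^{-cr^{2}}\de r$.

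\textbf{What each buys.} The paper's comparison gives the sharper structural fact that, at least for $\F=\C$, the scalar (rank-one) case is exactly the worst case, with the scalar constant. Your argument gives a cruder but still universal constant. In exchange, it treats $\F=\R$ and $\F=\C$ uniformly and is self-contained, with no product-density formulas. It also makes transparent where $\abs{\log\epsilon}$ comes from. In particular, if you keep two components $x_{1},x_{2}$ in your conditioning, the conditional variance is at least $\sigma_{2}^{2}(\abs{x_{1}}^{2}+\abs{x_{2}}^{2})$. Your computation then immediately gives the paper's subsequent remark that the logarithm disappears when $\sigma_{2}(S)>0$, with a constant depending on $\sigma_{1}/\sigma_{2}$.
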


\begin{proof}
    Without loss of generality, we assume that $\norm{S}=1$.
    Let us first consider the case where $S$ is a scalar. Using the density function of the magnitude of the product of two independent Gaussian random variables, as given in \cite{Epstein1948} for $\F=\R$ and \cite{Wells1962} for $\F=\C$, and the asymptotic behavior of he modified Bessel function of the second kind $K_{0}$ from \cite{NIST:DLMF},  we have 
    \begin{equation}
        \label{eq:probscalar}
        \begin{aligned}
            \prob(\abs{\omega_{L}^{\Htran}S\omega_{R}}&\leq \epsilon) = \prob\bigl(\abs{\omega_{L}^{\Htran}\omega_{R}}\leq \epsilon\bigr) = \frac{2}{\pi}\int_{0}^{\epsilon} K_{0}(x)\de x 
            = \order(\epsilon\abs{\log\epsilon})\quad \text{for}\quad \F=\R,\\ 
            \prob(\abs{\omega_{L}^{\Htran}S\omega_{R}}&\leq \epsilon) = \prob\bigl(\abs{\omega_{L}^{\Htran}\omega_{R}}\leq \epsilon\bigr) = \int_{0}^{2\epsilon} xK_{0}(x)\de x = \order(\epsilon^{2}\abs{\log\epsilon}) \quad \text{for}\quad \F=\C. 
        \end{aligned}
    \end{equation}
    When $S$ is not a scalar, without loss of generality, we assume that $S$ is a diagonal matrix whose first entry is $1$. Conditional on $\omega_{R}$, $\omega_{L}^{\Htran}S\omega_{R}$ is a Gaussian random variable with mean zero and variance $\norm{S\omega_{R}}^{2}$. 
    When $\F=\C$, the magnitude follows the Rayleigh distribution, yielding 
    \begin{equation*}
        \begin{aligned}
            \prob(\abs{\omega_{L}^{\Htran}S\omega_{R}}\leq \epsilon\mid \omega_{R}) 
            &= \int_{0}^{\epsilon}\frac{2x}{\norm{S\omega_{R}}^{2}}\exp\Bigl(\frac{-x^{2}}{\norm{S\omega_{R}}^{2}}\Bigr)\de x\\ 
            &= 1-\exp\Bigl(\frac{-\epsilon^{2}}{\norm{S\omega_{R}}^{2}}\Bigr) \leq 1-\exp\Bigl(-\frac{\epsilon^{2}}{\abs{\widetilde{\omega}_{R}}^{2}}\Bigr)
        \end{aligned}
    \end{equation*}
    where $\widetilde{\omega}_{R}$ is the first entry of $\omega_{R}$. By the law of total probability, we have 
    \begin{equation*}
        \begin{aligned}
            \prob(\abs{\omega_{L}^{\Htran}S\omega_{R}}\leq \epsilon) 
        &=\expt_{\omega_{R}}\prob(\abs{\omega_{L}^{\Htran}S\omega_{R}}\leq \epsilon\mid \omega_{R}) \\ 
        &\leq 1-\expt_{\omega_{R}} \exp\Bigl(-\frac{\epsilon^{2}}{\abs{\widetilde{\omega}_{R}}^{2}}\Bigr)=1-\expt_{\widetilde{\omega}_{R}} \exp\Bigl(-\frac{\epsilon^{2}}{\abs{\widetilde{\omega}_{R}}^{2}}\Bigr).
        \end{aligned}
    \end{equation*}
    Denote the first entry of $\omega_{L}$ by $\widetilde{\omega}_{L}$. With the same argument as before, we know that 
    \begin{equation*}
        \prob (\abs{\widetilde{\omega}_{L}\widetilde{\omega}_{R}}\leq \epsilon ) 
        =   \expt_{\widetilde{\omega}_{R}} \prob (\abs{\widetilde{\omega}_{L}\widetilde{\omega}_{R}}\leq \epsilon \mid \widetilde{\omega}_{R}) 
        = 1-\expt_{\widetilde{\omega}_{R}}\exp\Bigl(-\frac{\epsilon^{2}   }{\abs{\widetilde{\omega}_{R}}^{2}}\Bigr).
    \end{equation*}
    Combining the two relationships above, we obtain 
    \begin{equation*}
        \prob(\abs{\omega_{L}^{\Htran}S\omega_{R}}\leq \epsilon) \leq 1-\expt_{\widetilde{\omega}_{R}} \exp\Bigl(-\frac{\epsilon^{2}   }{\abs{\widetilde{\omega}_{R}}^{2}}\Bigr)
        = \prob (\abs{\widetilde{\omega}_{L}\widetilde{\omega}_{R}}\leq \epsilon ).
    \end{equation*}
    The proof for $\F=\C$ is completed by using the bound for the scalar case in \cref{eq:probscalar}.

    When $\F=\R$, consider the function $\psi(\alpha) = \alpha\exp(-\alpha^{2} x^{2}/2)$, where $0\leq x^{2}\leq \epsilon^{2}$. Note that 
    \begin{equation*}
        \psi^{\prime}(\alpha) = (1-\alpha^{2} x^{2})\exp(-\alpha^{2} x^{2}/2)\geq 0 \quad\text{if}\quad \alpha\leq \epsilon^{-1}.
    \end{equation*}
    Since $\norm{S\omega_{R}}\geq \abs{\widetilde{\omega}_{R}}$, when $\abs{\widetilde{\omega}_{R}}\geq \epsilon$, we have 
    \begin{equation*}
        \begin{aligned}
            \prob(\abs{\omega_{L}^{\Htran}S\omega_{R}}\leq \epsilon\mid \omega_{R}) 
        &= \int_{0}^{\epsilon}\frac{2}{\sqrt{2\pi}\norm{S\omega_{R}}}\exp\Bigl(\frac{-x^{2}}{2\norm{S\omega_{R}}^{2}}\Bigr)\de x 
        \\ 
        &\leq \int_{0}^{\epsilon}\frac{2}{\sqrt{2\pi}\abs{\widetilde{\omega}_{R}}}\exp\Bigl(\frac{-x^{2}}{2\abs{\widetilde{\omega}_{R}}^{2}}\Bigr)\de x
        = \prob(\abs{\widetilde{\omega}_{L}\widetilde{\omega}_{R}}\leq \epsilon\mid \widetilde{\omega}_{R})
        \end{aligned}
    \end{equation*}
    Using the law of total probability again, we obtain 
    \begin{equation*}
        \begin{aligned}
        &\prob(\abs{\omega_{L}^{\Htran}S\omega_{R}}\leq \epsilon) =  
        \expt_{\omega_{R}}\prob(\abs{\omega_{L}^{\Htran}S\omega_{R}}\leq \epsilon\mid \omega_{R})\\  
        \leq\ &\expt_{\omega_{R}}(\boldsymbol{1}_{\abs{\widetilde{\omega}_{R}}\geq \epsilon}+\boldsymbol{1}_{\abs{\widetilde{\omega}_{R}}< \epsilon})\prob(\abs{\omega_{L}^{\Htran}S\omega_{R}}\leq \epsilon\mid \omega_{R})\\
        \leq\ &\expt_{\omega_{R}}\boldsymbol{1}_{\abs{\widetilde{\omega}_{R}}\geq \epsilon}\prob(\abs{\widetilde{\omega}_{L}\widetilde{\omega}_{R}}\leq \epsilon\mid \widetilde{\omega}_{R}) + \prob(\abs{\widetilde{\omega}_{R}}\leq \epsilon)\\
        \leq\ &\expt_{\widetilde{\omega}_{R}}\prob(\abs{\widetilde{\omega}_{L}\widetilde{\omega}_{R}}\leq \epsilon\mid \widetilde{\omega}_{R}) + \prob(\abs{\widetilde{\omega}_{R}}\leq \epsilon)
        =\prob (\abs{\widetilde{\omega}_{L}\widetilde{\omega}_{R}}\leq \epsilon ) + \prob(\abs{\widetilde{\omega}_{R}}\leq \epsilon).
        \end{aligned} 
    \end{equation*}
    We complete the proof for $\F=\R$ by using the bound for the scalar case in \cref{eq:probscalar} again.
\end{proof}
\begin{remark}
    The $\abs{\log\epsilon}$ factor can be removed by using the second largest singular value of $S$ if it is positive.
    The key observation is that, when $\omega_{L}, \omega_{R} \in \mathbb{F}^{d}$ with $d\geq2$, the small-ball probability for $\abs{\omega_{L}^{\Htran} \omega_{R}} \leq \epsilon$ scales as $\order(\epsilon^{\ell})$, rather than the $\order(\epsilon^{\ell} \abs{\log\epsilon})$ behavior that arises in the scalar case \cref{eq:probscalar}.
    This situation is, however, atypical, since for a non-defective eigenvalue $\lambda_{i}$, the matrix $S_{ik}$ in \cref{eq:defS} has rank one. 
\end{remark}
\begin{remark}
    The real case in \cref{lem:prob} is similar to \cite[Thm.~2.2]{Bujanovic2021}, which provides an anti-concentration bound for $\prob(\abs{\omega_{L}^{\Htran}S\omega_{R}}\leq \epsilon\norm{S}_{\fro})$. However, our result is slightly stronger in the sense that our bound is independent of the size of $S$.
\end{remark}

\cref{lem:prob} studies the robustness when using probing vectors. If one uses probing matrices with block size $b$, the probability of $\norm{\Omega_{L}^{\Htran}S_{ik}\Omega_{R}}_{\fro}$ being close to zero will decrease dramatically. This is because the sum  of several non-negative independent random variables is very unlikely to be close to zero. The following theorem formalizes this argument.

\begin{theorem}
    \label{thm:prob}
    Consider the surrogate function $F$ defined in \cref{eq:defF}, where $\Omega_{L}$ and $\Omega_{R}$ are two i.i.d. $n\times b$ Gaussian random matrices in $\F$. For a sufficiently small $\epsilon>0$ and coefficient matrices $S_{ik}\in\F^{n\times n}$ in \cref{eq:defS}, we have
    \begin{equation*}
        \prob\Biggl( \min_{
        \substack{1\leq i\leq s\\  1\leq k\leq m_{i,1}} }\frac{\norm{\Omega_{L}^{\Htran}S_{ik}\Omega_{R}}_{\fro}}{\norm{S_{ik}}}\leq \epsilon \Biggr)
        = \order\bigl((\epsilon^{\ell}\abs{\log\epsilon})^{b}\bigr),
    \end{equation*} 
    where $\ell=1$ for $\F=\R$ and $\ell=2$ for $\F=\C$. 
\end{theorem}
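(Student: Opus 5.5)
Plan: reduce to a single coefficient matrix by a union bound, then use the column structure of the block probing matrices to get $b$ independent copies of the event controlled by \cref{lem:prob}.

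Since the number of pairs $(i,k)$ with $1\leq i\leq s$ and $1\leq k\leq m_{i,1}$ is finite (at most $m$), a union bound gives
\begin{equation*}
\prob\Biggl( \min_{i,k}\frac{\norm{\Omega_{L}^{\Htran}S_{ik}\Omega_{R}}_{\fro}}{\norm{S_{ik}}}\leq \epsilon \Biggr)\leq \sum_{i,k}\prob\bigl(\norm{\Omega_{L}^{\Htran}S_{ik}\Omega_{R}}_{\fro}\leq \epsilon\norm{S_{ik}}\bigr).
\end{equation*}
So it suffices to fix one nonzero $S=S_{ik}$ and show $\prob(\norm{\Omega_{L}^{\Htran}S\Omega_{R}}_{\fro}\leq\epsilon\norm{S})=\order((\epsilon^{\ell}\abs{\log\epsilon})^{b})$. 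The constant then absorbs the factor $m$, which depends on $T$ and $\region$ but not on $\epsilon$. Each $S_{ik}$ is nonzero, since otherwise the pole term would simply be absent.

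Write $\Omega_{L}=[\omega_{L,1},\dotsc,\omega_{L,b}]$ and $\Omega_{R}=[\omega_{R,1},\dotsc,\omega_{R,b}]$, whose columns are i.i.d.\ Gaussian vectors. The Frobenius norm dominates each entry, so $\norm{\Omega_{L}^{\Htran}S\Omega_{R}}_{\fro}\leq\epsilon\norm{S}$ implies $\abs{\omega_{L,j}^{\Htran}S\omega_{R,j}}\leq\epsilon\norm{S}$ for every diagonal entry $j=1,\dotsc,b$. The diagonal entries are built from disjoint pairs $(\omega_{L,j},\omega_{R,j})$, so these $b$ events are mutually independent. Applying \cref{lem:prob} to each event gives
\begin{equation*}
\prob\bigl(\norm{\Omega_{L}^{\Htran}S\Omega_{R}}_{\fro}\leq \epsilon\norm{S}\bigr)\leq\prod_{j=1}^{b}\prob\bigl(\abs{\omega_{L,j}^{\Htran}S\omega_{R,j}}\leq\epsilon\norm{S}\bigr)=\order\bigl((\epsilon^{\ell}\abs{\log\epsilon})^{b}\bigr).
\end{equation*}
The constant inside \cref{lem:prob} is universal, so the resulting constant is $C^{b}$, which is independent of $S$.

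The main subtlety is choosing which entries to use. Entries in the same row or column share a probing vector and are not independent; restricting to the diagonal avoids this. Off-diagonal entries would only help, and they are not needed for the stated rate. One must also check that ``sufficiently small $\epsilon$'' in \cref{lem:prob} can be taken uniformly. This holds because the lemma's threshold and constant do not depend on $S$.
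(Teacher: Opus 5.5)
Your proposal is correct and matches the paper's proof. Both use a union bound over the at most $m$ pairs $(i,k)$, restrict $\norm{\Omega_{L}^{\Htran}S\Omega_{R}}_{\fro}$ to the $b$ diagonal entries $\omega_{L,j}^{\Htran}S\omega_{R,j}$ (independent because they use disjoint column pairs), and multiply the bounds from \cref{lem:prob}. One minor correction: your justification that every $S_{ik}$ is nonzero is not valid for a genuinely nonlinear $T$, since e.g.\ $T(z)=z^{2}$ has a vanishing $z^{-1}$ coefficient; the ratio is then undefined and such pairs must simply be excluded, which the paper also assumes implicitly.
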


\cref{thm:prob} shows that $\norm{\Omega_{L}^{\Htran}S_{ik}\Omega_{R}}_{\fro}$ is very unlikely to be significantly smaller than $\norm{S_{ik}}$, especially for slightly larger $b$. This implies that sketching will restore all modes $(z-\lambda_{i})^{-k}$ in \cref{eq:defS} with high probability. Moreover, we remark that $\norm{\Omega_{L}^{\Htran}S\Omega_{R}}_{\fro}$ is closely related to a trace estimator with Kronecker structure. This is because 
\begin{equation*}
    \begin{aligned}
        \norm{\Omega_{L}^{\Htran}S\Omega_{R}}_{\fro}^{2} 
    &= \bignorm{\mathsf{vec}(\Omega_{L}^{\Htran}S\Omega_{R})}^{2}
    = \Tr\Bigl((\overline{\Omega}_{R}\otimes\Omega_{L})^{\Htran} \mathsf{vec}(S)\mathsf{vec}(S)^{\Htran}(\overline{\Omega}_{R}\otimes\Omega_{L})\Bigr)\\ 
    &= \sum_{i,j=1}^{b} \omega_{ij}^{\Htran}(\mathsf{vec}(S)\mathsf{vec}(S)^{\Htran})\omega_{ij},\quad \text{where}\quad  \omega_{ij} = \overline{\omega}_{R,i}\otimes \omega_{L,j}
    \end{aligned}
\end{equation*}
with $\Omega_{L}=[\omega_{L,1},\dotsc,\omega_{L,b}]$ and  $\Omega_{R}=[\omega_{R,1},\dotsc,\omega_{R,b}]$. Compared with existing results in \cite{meyer2023hutchinson,meyer2025understanding}, we focus on anti-concentration results instead of trace estimation.

\begin{proof}[Proof of \cref{thm:prob}]
    By taking the union bound, we know that 
    \begin{equation}
        \label{eq:pfthmprob}
        \begin{aligned}
            &\prob\Biggl( \min_{
        \substack{1\leq i\leq s\\  1\leq k\leq m_{i,1}}}\frac{\norm{\Omega_{L}^{\Htran}S_{ik}\Omega_{R}}_{\fro}}{\norm{S_{ik}}}\leq \epsilon \Biggr)
        \leq 
        \sum_{i=1}^{s}\sum_{k=1}^{m_{ij}}
        \prob\Bigl( \frac{\norm{\Omega_{L}^{\Htran}S_{ik}\Omega_{R}}_{\fro}}{\norm{S_{ik}}}\leq \epsilon \Bigr)\\ 
        \leq\ & \max_{
        \substack{1\leq i\leq s\\  1\leq k\leq m_{i,1}} }
        \prob\Bigl(\frac{\norm{\Omega_{L}^{\Htran}S_{ik}\Omega_{R}}_{\fro}}{\norm{S_{ik}}}\leq \epsilon \Bigr)\sum_{i=1}^{s}m_{ij}
        =m\max_{
        \substack{1\leq i\leq s\\  1\leq k\leq m_{i,1}} }
        \prob\Bigl(\frac{\norm{\Omega_{L}^{\Htran}S_{ik}\Omega_{R}}_{\fro}}{\norm{S_{ik}}}\leq \epsilon \Bigr).
        \end{aligned} 
    \end{equation}
    Let $\Omega_{L}=[\omega_{L,1},\dotsc,\omega_{L,b}]
    $ and $\Omega_{R}=[\omega_{R,1},\dotsc,\omega_{R,b}]$. 
    For any matrix $S\in\F^{n\times n}$ with $\norm{S}=1$, we have 
    \begin{equation*}
        \norm{\Omega_{L}^{\Htran}S\Omega_{R}}_{\fro}^{2} = \sum_{i,j=1}^{b}\abs{\omega_{L,i}^{\Htran}S\omega_{R,j}}^{2}\geq \sum_{i=1}^{b}\abs{\omega_{L,i}^{\Htran}S\omega_{R,i}}^{2}.  
    \end{equation*}
    Note that $\omega_{L,i}^{\Htran}S\omega_{R,i}$ are i.i.d. random variables for $1\leq i\leq b$. 
    
    Using the independence and \cref{lem:prob}, we have 
    \begin{equation}
        \label{eq:probS}
        \begin{aligned}
            \prob \bigl(\norm{\Omega_{L}^{\Htran}S\Omega_{R}}_{\fro}\leq \epsilon \bigr) 
            &\leq  
        \prob \bigl(\sum_{i=1}^{b}\abs{\omega_{L,i}^{\Htran}S\omega_{R,i}}^{2}\leq \epsilon^{2} \bigr)\leq 
        \prod_{i=1}^{b}\prob \bigl(\abs{\omega_{L,i}^{\Htran}S\omega_{R,i}}\leq \epsilon \bigr)\\ 
        & \leq \order\bigl((\epsilon^{\ell}\abs{\log\epsilon})^{b}\bigr).
        \end{aligned}
    \end{equation}
    The proof is completed by plugging it into \cref{eq:pfthmprob}.
\end{proof}

The probability bound in \cref{eq:probS} is sharp up to a factor of $\abs{\log\epsilon}^{b-1}$ when $S$ has rank one, as occurs, for example, for the residue associated with a simple eigenvalue. In the less typical case in which every relevant coefficient matrix $S_{ik}$ has at least two nonnegligible singular values, numerical evidence suggests that the tail probability scales like $\epsilon^{(2b-1)\ell}$; see, for example, \cref{fig:gaussian}. We also remark that \cref{thm:prob} does not cover the case in which $T$ has complex eigenvectors while the sketching matrices are real Gaussian, which is a highly uncommon setup. For example, eigenvalues of standard real nonsymmetric problems are generically complex, so one usually use complex samples. Thus, complex Gaussian matrices are the natural choice for sketching.

\begin{figure}[htbp]
    \centering
    \includegraphics[width=\figsizeD]{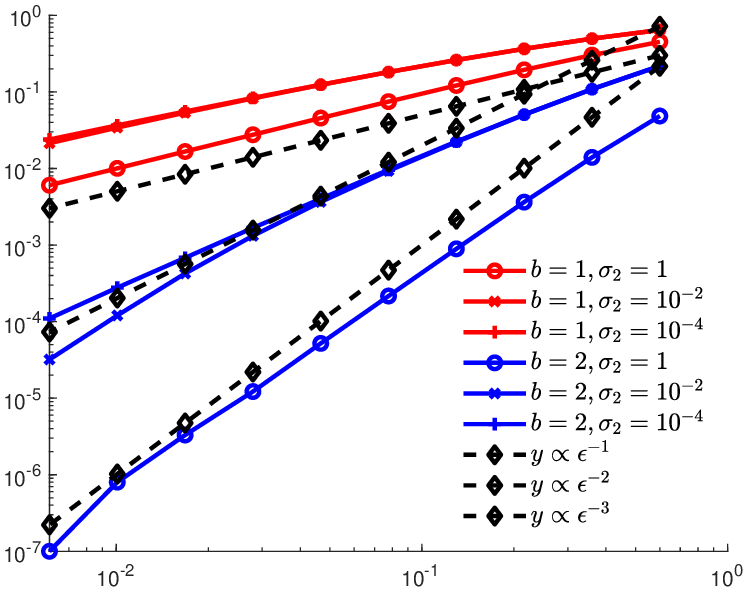}
    \includegraphics[width=\figsizeD]{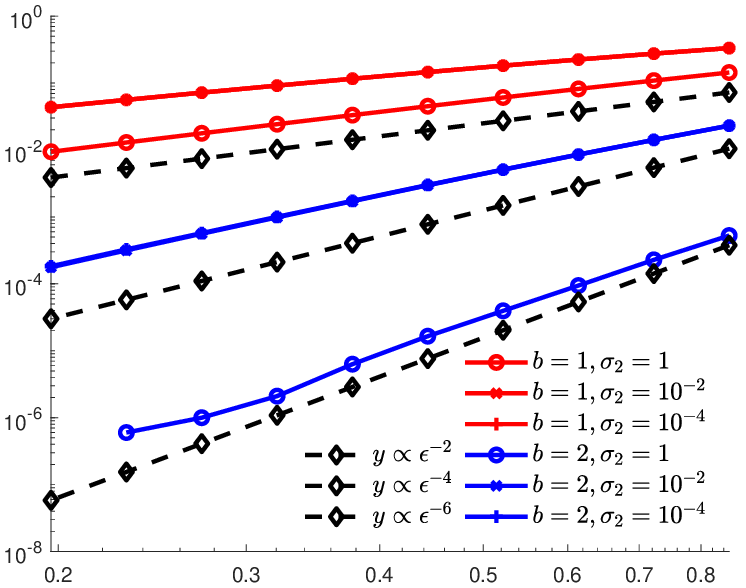}
    \caption{Small ball probability ($10^{7}$ samples) for $\prob(\norm{\Omega_{L}^{\Htran}S\Omega_{R}}_{\fro}\leq \epsilon)$, where $S\in\F^{100\times 100}$ is a rank-two matrix with $\norm{S}=1$ and $\Omega_{L}$ and $\Omega_{R}\in\F^{n\times b}$ for $\F=\R$ (left) and $\F=\C$ (right). 
    }
    \label{fig:gaussian}    
\end{figure}

Although \cref{thm:prob} only provides tail bounds for the Frobenius norm, we can also derive analogous bounds for the singular values.
In particular, if $S_{ik}$ has rank $(m_{ij}-k+1)$ and the block sizes of $\Omega_{L}$ and $\Omega_{R}$ are larger than $(m_{ij}-k+1)$, then the $(m_{ij}-k+1)$-th largest singular value of $\Omega_{L}^{\Htran} S_{ik} \Omega_{R}$ deviates from zero with high probability.
To demonstrate this, consider a rank-$m$ matrix $S \in \F^{n \times n}$ with $m \leq b \ll n$.
Without loss of generality, we assume that $S$ is diagonal with its first $m$ diagonal entries being nonzero.
Let $\Omega_{L}^{(m)}$ and $\Omega_{R}^{(m)}$ denote the first $m$ rows of $\Omega_{L}$ and $\Omega_{R}$, respectively.
Then we have
\begin{equation*}
    \kappa_{m}(\Omega_{L}^{\Htran}S\Omega_{R})\leq \kappa_{m}(S) \kappa_{m}(\Omega_{L}^{(m)})\kappa_{m}(\Omega_{R}^{(m)}),
\end{equation*}  
where $\kappa_{m}(\cdot)$ denotes the ratio between the largest and the $m$-th largest singular values.
Using the tail bounds for the singular values of Gaussian random matrices (see, for example, \cite{Rudelson2010}), we find that $\kappa_{m}(\Omega_{L}^{\Htran} S \Omega_{R})$ is 
bounded by a modest constant multiple of $\kappa_{m}(S)$ with high probability, implying that all $m$ nonzero singular values of $\Omega_{L}^{\Htran} S \Omega_{R}$ are comparable.
Consequently, we can obtain a corresponding bound for the singular values by applying \cref{thm:prob}.\section{Convergence analysis}
\label{sec:convergence}
Given a surrogate function $F$, this section discusses the convergence of poles of its rational approximation $R$. In the following, we assume that the rational (matrix-valued) approximation $R$ is of the following barycentric form
\begin{equation}
    \label{eq:defR}
    R(z) = \sum_{k=1}^{K}\frac{w_{k}F_{k}}{z-z_{k}}\Big/\sum_{k=1}^{K}\frac{w_{k}}{z-z_{k}}\in\C^{b\times b}, 
\end{equation}
where $z_{1},\dotsc,z_{K}\in\region$ are support points, $w_{1},\dotsc,w_{K}\in\C$ are (normalized) weights with $\sum_{k=1}^{K}\abs{w_{k}}^{2}=1$ and $F_{k}=F(z_{k})\in\C^{b\times b}$. 
Such an approximation is computed by the (set-valued) AAA method.
\begin{remark}
    As mentioned in \cite{Gosea2021}, there are two kinds of matrix-valued barycentric forms, where the other one takes matrix-valued coefficients $W_{k}$. We use the form in \cref{eq:defR} because with the scalar-valued denominator, we can restrict our discussion to polefinding for scalar functions.
\end{remark}

Given a meromorphic function $f$ on a bounded domain $\region\subset \C$, it is impossible to expect a sequence of rational functions $r_{n}$ to converges uniformly to $f$ unless they share exactly the same poles in $\region$. 
However, if we assume that $f$ is holomorphic on $\partial\region$, the convergence on $\partial\region$ can be shown to be the so-called $m_{1}$-almost uniform convergence \cite{Goncar1975,Blatt2011}, that is, $r_{n}$ converges uniformly to $f$ in $\region$ except on an arbitrarily small subset covering all poles of $f$. This $m_{1}$-almost uniform convergence further implies that the poles of $r_{n}$ converge to those of $f$. 
In this section, we first quantify the convergence of poles on a unit domain, and then discuss the effect of zooming in through a scaling argument.
Furthermore, we explain the rapid convergence observed when only a few poles lie within the domain $\region$, which is primarily due to the zooming-in technique, especially when block probing is employed.

Throughout this section, we let $\poly_{n}$ denote the space of polynomials of degree at most $n$, and let $\rat_{n,\widetilde{m}_{n}}$ denote the space of rational functions with relatively prime numerator and denominator, having at most $n$ zeros and at most $\widetilde{m}_{n}$ poles in $\region$, counted with multiplicity.
We remark that the notation $n$ is reused here and should not be confused with its previous meaning as the size of $T$.

\subsection{Convergence and effect of zooming in}
In \cite[Lem.~1]{Walsh1964}, Walsh asserted that if a sequence of rational functions $r_{n}$ converges to a meromorphic function $f$ on a deleted neighborhood of its pole $\lambda_{i}$ with multiplicity $m_{i,1}$, then for sufficiently large $n$, $r_{n}$ contains at least $m_{i,1}$ poles in a small neighborhood of $\lambda_{i}$. The following result quantifies this phenomenon.  

\begin{theorem}
    \label{thm:convPolesIn}
    Let $\region\subset\C$ be a bounded connected domain with unit diameter. Assume that $f$ is meromorphic in $\region$ and holomorphic in a neighborhood of $\partial\region$, with poles $\lambda_{1},\dotsc,\lambda_{s}$ of multiplicities $m_{1,1},\dotsc,m_{s,1}$, respectively. Given an $\epsilon>0$, let
    \begin{equation*}
        \Lambda_{i}(\epsilon)\defi\{z\in\region\mid\abs{z-\lambda_{i}}\leq\epsilon^{1/m_{i,1}}\}.
    \end{equation*}
    Let $\widetilde{m}=\sum_{i=1}^{s}m_{i,1}$ and let $r_{n}\in\rat_{n,\widetilde{m}_{n}}$ be a sequence of rational functions satisfying
    \begin{equation*}
        \norm{f-r_{n}}_{\partial\region}\leq\gamma_{n},\qquad\lim_{n\to\infty}\gamma_{n}=0,\qquad\widetilde{m}_{n}\leq\widetilde{m}.
    \end{equation*}
    Then there exist $\epsilon_{0}>0$ and $C>0$, independent of $n$ and $\epsilon$, such that, for every $0<\epsilon\leq\epsilon_{0}$ and every $n$ satisfying $\gamma_{n}\leq C^{-1}\epsilon$, the function $r_{n}$ contains exactly $m_{i,1}$ poles in $\Lambda_{i}(\epsilon)$.
\end{theorem}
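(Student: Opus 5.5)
The approach is to clear denominators and apply Rouché's theorem. Set $q(z)\defi\prod_{i=1}^{s}(z-\lambda_{i})^{m_{i,1}}$, a polynomial of degree $\widetilde m$, and $g\defi qf$. Then $g$ is holomorphic on $\overline{\region}$ and has no zeros at the $\lambda_i$, since each $\lambda_i$ is a pole of order exactly $m_{i,1}$. Write $r_n=p_n/q_n$, where $q_n$ is the monic polynomial whose roots are the poles of $r_n$ in $\region$; by assumption $\deg q_n\le\widetilde m$. The factor of the denominator with roots outside $\region$ is absorbed into the holomorphic function $h_n\defi r_nq_n$ on $\region$.

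The first step is to transfer the boundary error to a statement about $q_n$. On $\partial\region$ we have $\abs{h_n q-g q_n}=\abs{qq_n}\,\abs{r_n-f}\le\gamma_n\norm{qq_n}_{\partial\region}$. Because $\region$ has unit diameter and all roots lie in $\region$, $\norm{q_n}_{\partial\region}\le 1$ and $\norm{q}_{\partial\region}\le 1$. Hence $\norm{h_nq-gq_n}_{\partial\region}\le\gamma_n$, and by the maximum principle the same bound holds on $\overline\region$. Evaluating at $z=\lambda_i$ and at its derivatives up to order $m_{i,1}-1$ (Cauchy estimates on a small disc, possible because $\lambda_i$ is interior) then yields $\abs{(gq_n)^{(k)}(\lambda_i)}\le C_1\gamma_n$ for $k<m_{i,1}$. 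Since $g(\lambda_i)\neq0$, this says that $q_n$ nearly vanishes to order $m_{i,1}$ at $\lambda_i$: inductively, $\abs{q_n^{(k)}(\lambda_i)}\le C_2\gamma_n$ for $k<m_{i,1}$, with $C_2$ depending only on $g$ near the $\lambda_i$.

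The second step is to count roots by Rouché. Expand $q_n$ around $\lambda_i$ as $q_n(z)=\sum_{k}c_k(z-\lambda_i)^k$. The coefficients $c_k$ with $k<m_{i,1}$ are $\order(\gamma_n)$. For the leading part, compare $q_n$ with the monic polynomial $q$. Every root of $q_n$ must be close to some $\lambda_j$ (this needs justification, see below). Consequently $q_n(z)/(z-\lambda_i)^{m_{i,1}}$ is bounded away from zero on the circle $\abs{z-\lambda_i}=\rho\defi\epsilon^{1/m_{i,1}}$. On that circle the dominant term is therefore of size $\Theta(\rho^{m_{i,1}})=\Theta(\epsilon)$, while the lower-order terms contribute at most $C_3\gamma_n$. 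Choosing $\gamma_n\le C^{-1}\epsilon$ with $C$ large, Rouché's theorem gives exactly $m_{i,1}$ roots of $q_n$ in $\Lambda_i(\epsilon)$, provided $\epsilon_0$ is small enough that the discs are disjoint and contained in $\region$. The total count $\sum_i m_{i,1}=\widetilde m\ge\deg q_n$ then also forces $\deg q_n=\widetilde m$. Near-cancellation of a pole of $r_n$ by a root of $p_n$ is excluded because $r_n$ is in lowest terms; near-cancellation is harmless anyway, since we count poles of $r_n$, that is, roots of $q_n$.

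The main obstacle is making the Rouché comparison uniform in $n$: we need a lower bound for $\prod_{j\ne i}$ of the factors of $q_n$ on the circle around $\lambda_i$. I would get this by a global Rouché argument on $q_n$ versus $q$ directly. Using the previous estimates at all $\lambda_j$ simultaneously, Hermite interpolation at the nodes $\lambda_j$ with multiplicities $m_{j,1}$ shows that the degree-$<\widetilde m$ remainder $q_n-\alpha q$ has $\order(\gamma_n)$ coefficients, where $\alpha$ is the leading coefficient of $q_n$ in degree $\widetilde m$. It remains to rule out $\alpha$ being small, that is, $\deg q_n<\widetilde m$. This case is impossible for small $\gamma_n$, because then $q_n$ would be an $\order(\gamma_n)$-small monic polynomial of lower degree, contradicting monicity via the same interpolation bounds. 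Hence $\alpha=1$, $q_n=q+\order(\gamma_n)$ coefficientwise, and $\abs{q_n-q}\le C\gamma_n<\abs{q}$ on each circle $\abs{z-\lambda_i}=\epsilon^{1/m_{i,1}}$, where $\abs{q}\gtrsim\epsilon$. This completes the count.
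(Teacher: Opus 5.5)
Your proposal is correct and follows essentially the same route as the paper. You clear denominators with $q=\prod_i(z-\lambda_i)^{m_{i,1}}$, use the maximum principle and Cauchy estimates to show that the Hermite data of $q_n$ at the $\lambda_i$ are $\order(\gamma_n)$, and then use unisolvence of Hermite interpolation together with monicity to get $\deg q_n=\widetilde m$ and $\norm{q_n-q}=\order(\gamma_n)$; the paper phrases the interpolation step as the kernel of $h\mapsto((hp_f)^{(k)}(\lambda_i))$ being $\spa{q_f}$. You finish, as the paper does, with Rouch\'e against $\abs{q}\gtrsim\epsilon$ on the circles, and the local-expansion argument in your second step is superseded by this global comparison and can simply be dropped.
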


In numerical computation, one cannot expect to obtain an arbitrarily small $\gamma_{n}$. Instead, \cref{thm:convPolesIn} should be understood as follows: Suppose we can compute a rational approximation $r_{n}$ of $f$ on $\partial \region$ with accuracy $\gamma_{n}=\order(\mpr)$, for example by AAA, then the approximation accuracy of a pole $\lambda_{i}$ of multiplicity $m_{i,1}$ inside $\region$ is expected to be $(C\mpr)^{1/m_{i,1}}$. 
Another remark is that the multiplicity of a pole corresponds to the largest partial algebraic multiplicity of the eigenvalue, instead of its total algebraic/geometric multiplicity in our primary context, where $f$ is a sketched resolvent of an NEP.
\begin{proof}[Proof of \cref{thm:convPolesIn}]
    We assume that $\epsilon_{0}$ is small enough such that $\Lambda_{i}(\epsilon_{0})$ are pairwise disjoint and compactly contained in $\region$. Let
    \begin{equation*}
        q_{\widetilde{m}_{n}}(z)=\prod_{\substack{1\leq j\leq \widetilde{m}_{n}\\ \mu_{j}\neq\infty}}(z-\mu_{j}),
    \end{equation*}
    where the finite $\mu_{j}$ are precisely the poles of $r_{n}$ in $\region$, counted with multiplicity, and the remaining $\mu_{j}$ are set to $\infty$. Then $q_{\widetilde{m}_{n}}$ is monic, $\deg(q_{\widetilde{m}_{n}})\leq\widetilde{m}_{n}$, and all its zeros lie in $\region$. Moreover, $p_{n}\defi r_{n}q_{\widetilde{m}_{n}}$ is holomorphic in $\region$.
    
    Let
    \begin{equation*}
        q_{f}(z)=\prod_{i=1}^{s}(z-\lambda_{i})^{m_{i,1}}\quad\text{and}\quad  p_{f}(z)=f(z)q_{f}(z).
    \end{equation*}
    Then $p_{f}$ is holomorphic in $\region$ and $p_{f}(\lambda_{i})\neq0$ for $1\leq i\leq s$. Since $\region$ has unit diameter and all zeros of $q_{\widetilde{m}_{n}}$ lie in $\region$, we have $\norm{q_{\widetilde{m}_{n}}}_{\partial\region}\leq 1$.

The first step is to establish the bound \cref{eq:denominatorClose} for $\norm{q_{\widetilde{m}_{n}}-q_{f}}_{\partial\region}$.
    Define
    \begin{equation*}
        E_{n}=p_{f}q_{\widetilde{m}_{n}}-p_{n}q_{f}=(f-r_{n})q_{f}q_{\widetilde{m}_{n}}.
    \end{equation*}
    The function $E_{n}$ is holomorphic in $\region$, and
    \begin{equation}
        \label{eq:rouche1}
        \norm{E_{n}}_{\partial\region}\leq\norm{f-r_{n}}_{\partial\region}\norm{q_{f}}_{\partial\region}\norm{q_{\widetilde{m}_{n}}}_{\partial\region}\leq C_{1}\gamma_{n}
    \end{equation}
    for some $C_{1}>0$ independent of $n$. By the maximum modulus principle and Cauchy's estimates, we have 
    \begin{equation}
        \label{eq:hermiteSmall}
        \abs{E_{n}^{(k)}(\lambda_{i})}\leq C_{2}\gamma_{n}\quad\text{for}\quad  0\leq k<m_{i,1},
    \end{equation}
    for some $C_{2}>0$ independent of $n$. Since $q_{f}$ has a zero of multiplicity $m_{i,1}$ at $\lambda_{i}$, we know that $(p_{n}q_{f})^{(k)}(\lambda_{i})=0$ for $0\leq k<m_{i,1}$.
    It follows from \eqref{eq:hermiteSmall} that
    \begin{equation}
        \label{eq:hermiteSmall2}
        \abs{(p_{f}q_{\widetilde{m}_{n}})^{(k)}(\lambda_{i})}\leq C_{2}\gamma_{n},\quad\text{for}\quad  0\leq k<m_{i,1}.
    \end{equation}

    Consider the linear map
    \begin{equation*}
        \Psi:\poly_{\widetilde{m}}\to\C^{\widetilde{m}},\qquad \Psi(h)=\left((hp_{f})^{(k)}(\lambda_{i})\right)_{\substack{1\leq i\leq s\\0\leq k<m_{i,1}}}.
    \end{equation*}
    If $\Psi(h)=0$, then $hp_{f}$ has a zero of multiplicity at least $m_{i,1}$ at every $\lambda_{i}$. Since $p_{f}(\lambda_{i})\neq0$, the polynomial $h$ has a zero of multiplicity at least $m_{i,1}$ at every $\lambda_{i}$, and hence $q_{f}$ divides $h$. Since $h\in\poly_{\widetilde{m}}$ and $\deg(q_{f})=\widetilde{m}$, it follows that $\mathsf{null}(\Psi)=\spa{q_{f}}$.
    Then using \cref{eq:hermiteSmall2}, we know that there exists a $c_{n}\in\C$, such that
    \begin{equation*}
        \norm{q_{\widetilde{m}_{n}}-c_{n}q_{f}}_{\partial\region}\leq C_{3}\gamma_{n}   
    \end{equation*}
    for some $C_{3}>0$ independent of $n$. By equivalence of norms on the finite-dimensional space $\poly_{\widetilde{m}}$, the coefficients of $q_{\widetilde{m}_{n}}-c_{n}q_{f}$ are $\order(\gamma_{n})$. Since $q_{\widetilde{m}_{n}}$ and $q_{f}$ are monic, comparison of their leading coefficients shows that, for sufficiently small $\gamma_{n}$, $\deg(q_{\widetilde{m}_{n}})=\widetilde{m}$ and $c_{n}=1+\order(\gamma_{n})$. Consequently,
\begin{equation}
    \label{eq:denominatorClose}
    \norm{q_{\widetilde{m}_{n}}-q_{f}}_{\partial\region}\leq C_{4}\gamma_{n}
\end{equation}
for some $C_{4}>0$ independent of $n$.

    Now we are ready to establish the convergence of poles.
    For $z\in\partial\Lambda_{i}(\epsilon)$, we have
    \begin{equation*}
        \abs{q_{f}(z)}=\epsilon\prod_{\substack{1\leq j\leq s\\j\neq i}}\abs{z-\lambda_{j}}^{m_{j,1}}.
    \end{equation*}
    Since the poles $\lambda_{1},\dotsc,\lambda_{s}$ are distinct, there exists $C_{5}>0$ such that
    \begin{equation}
        \label{eq:qfLower}
        \min_{z\in\partial\Lambda_{i}(\epsilon)}\abs{q_{f}(z)}\geq C_{5}\epsilon,\qquad 0<\epsilon\leq\epsilon_{0}.
    \end{equation}
    By the maximum modulus principle and \eqref{eq:denominatorClose}, we have 
    \begin{equation*}
        \norm{q_{\widetilde{m}_{n}}-q_{f}}_{\partial\Lambda_{i}(\epsilon)}\leq C_{4}\gamma_{n}.
    \end{equation*}
    Hence, if $\gamma_{n}\leq C^{-1}\epsilon$ for a sufficiently large constant $C$ independent of $n$ and $\epsilon$, then
    \begin{equation*}
        \abs{q_{\widetilde{m}_{n}}(z)-q_{f}(z)}<\abs{q_{f}(z)}\quad\text{for all}\quad  z\in\partial\Lambda_{i}(\epsilon).
    \end{equation*}
    Rouch\'e's theorem implies that $q_{\widetilde{m}_{n}}$ and $q_{f}$ have the same number of zeros in $\Lambda_{i}(\epsilon)$. Since $q_{f}$ has exactly $m_{i,1}$ zeros in $\Lambda_{i}(\epsilon)$, the function $r_{n}$ has exactly $m_{i,1}$ poles in $\Lambda_{i}(\epsilon)$, counted with multiplicity.
\end{proof}

To show the sharpness of the exponent, let us consider the following example: 
\begin{equation}
    \label{eq:example}
    f(z) = \frac{1}{(z-0.5)^{m}(z-2)(z-3)^{2}}.
\end{equation}
Sampling 100 points on $\partial\region\defi\{z\in\C \mid\abs{z}=1\}$ and using AAA to compute its rational approximation, we collect the accuracy of poles in $\region$ in \cref{tab:multi}, verifying our theoretical findings.
\begin{table}[htbp]
    \caption{Accuracy for computing a pole with multiplicity $m$.}
    \label{tab:multi}
    \centering
    \begin{tabular}{c|cccccccc}
        \toprule
        multiplicity $m$ &1 & 2& 3& 4 & 5 &6 & 7 & 8 \\ \midrule
        accuracy &3e-16 & 7e-9 & 8e-6 & 1e-4 & 5e-4 & 2e-3 & 6e-3 & 1e-2 \\ \midrule
        $\mpr^{1/m}$&2e-16 & 2e-8 & 6e-6 & 1e-4 & 7e-4 & 3e-3 & 6e-3 & 1e-2\\ \bottomrule
    \end{tabular}
\end{table}

In view of \cref{thm:convPolesIn}, there are mainly two issues that could prevent us from reaching satisfactory accuracy: a large pre-factor $C$ and high multiplicity $m_{i,1}$. To further improve the accuracy, as suggested in \cite{bruno2024evaluation}, one can zoom in on the domain.
Actually, by scaling arguments, we can show the benefit of approximation on a domain with diameter $\delta\ll 1$. 
More specifically, let $f^{(\delta)}(z)=f(\delta z)$, $r_{n}^{(\delta)}(z)=r_{n}(\delta z)$ and $\region^{(\delta)}=\{z/\delta\mid z\in\region\}$. Then we have that 
\begin{equation*}
    \norm{f^{(\delta)}(z)-r_{n}^{(\delta)}(z)}_{\partial\region^{(\delta)}} = \norm{f-r_{n}}_{\partial\region}\leq \gamma_{n}.
\end{equation*}
Since the diameter of $\region^{(\delta)}$ is $1$, using \cref{thm:convPolesIn}, we know that $r_{n}^{(\delta)}$ contains $m_{i,1}$ poles in 
\begin{equation*}
    \Lambda_{i}^{(\delta)}(\epsilon/\delta^{m_{i,1}})\defi \{z\in\region^{(\delta)}\mid \abs{z-\lambda_{i}/\delta}\leq \epsilon^{1/m_{i,1}}/\delta\},
\end{equation*}
where we use the fact that $\lambda_{i}/\delta $ is a pole of $f^{(\delta)}$ with multiplicity $m_{i,1}$.
Thus, $r_{n}$ contains $m_{i,1}$ poles in 
\begin{equation*}
    \{\delta z\mid z\in\Lambda_{i}^{(\delta)}(\epsilon/\delta^{m_{i,1}})\} = \{z\in\region\mid \abs{z-\lambda_{i}}\leq  \epsilon^{1/m_{i,1}}\}=\Lambda_{i}(\epsilon).
\end{equation*}
In summary, the scaling argument shows that the error in the original
coordinates is bounded by $\delta\bigl(C_{\delta}\gamma_{n}\bigr)^{1/m_{i,1}}$, where $C_{\delta}$ is the constant in \cref{thm:convPolesIn} applied to $f^{(\delta)}$ and $D^{(\delta)}$. Therefore, provided that $C_{\delta}$ remains bounded under zooming in, the accuracy of the approximate eigenvalues improves linearly with the diameter $\delta$ of the computational domain.
In particular, when $\gamma_{n}=O(\mpr)$ and $C_{\delta}$ is uniformly bounded, the accuracy improves from $(C\mpr)^{1/m_{i,1}}$ to
$\delta(C\mpr)^{1/m_{i,1}}$.
To illustrate with an example, we consider the same function defined in \cref{eq:example}. Using 
\begin{equation*}
    \partial\region^{(\delta)}\defi \{z\in\C\mid \abs{z-(1-\delta)/2}=\delta\},
\end{equation*}
where $\delta = 10^{-k}$ for $k=0,\dotsc,8$, numerical results in \cref{tab:zoomin} show the effectiveness of zooming in, suggesting that a small $\delta$ can significantly improve the accuracy.
Note that, under the uniform boundedness assumption above, the linear
scaling with $\delta$ is independent of the multiplicity of the pole.

\begin{table}[htbp]
    \caption{Accuracy for computing a pole with multiplicity $m=8$ in domain with radius $\delta$.}
    \label{tab:zoomin}
    \centering
    \begin{tabular}{c|ccccccccc}
        \toprule
        radius $\delta$ &1e-0 & 1e-1& 1e-2& 1e-3 & 1e-4 &1e-5 & 1e-6 & 1e-7&1e-8 \\ \midrule
        accuracy &1e-2 & 1e-3 & 5e-3 & 2e-5 & 5e-5 & 5e-6 & 5e-7 & 5e-8 &7e-10\\ \bottomrule
    \end{tabular}
\end{table}

\subsection{Fast convergence in domains with few poles}
The convergence result in \cref{thm:convPolesIn} does not fully explain the behavior of poles because of the constant $C$. Indeed, it depends strongly on the number of poles in $\region$. In this part, we further explore the convergence behavior with a focus on the situation where $f$ contains only simple poles within a  domain $\region$ of unit diameter. As shown above, under mild assumptions, the corresponding rational approximation $r_{n}$ also possesses the same number of poles in $\region$. For simplicity of notation, we drop the subscript $n$ of $r_{n}$ in this part.

Our analysis starts with a simple but illustrative situation, that is, $f$ only contains a single pole in $\region$. The following proposition establishes that the accuracy of the computed pole is proportional to the approximation quality on $\partial\region$.

\begin{proposition}
    \label{prop:singlepole}
    Let $\region\subset \C$ be a nonempty domain enclosed by Jordan curve. Consider the following two meromorphic functions:
    \begin{equation*}
        f(z) = \frac{w}{z-\lambda}+h(z)
        \quad\text{and}\quad 
        r(z) = \frac{\widehat{w}}{z-\widehat{\lambda}}+\widehat{h}(z),
    \end{equation*}
    where $\lambda$ and $\widehat{\lambda}$ are in $\region$ and $h$ and $\widehat{h}$ are holomorphic in a neighborhood of $\region$. Let $\varepsilon\defi \norm{f-r}_{\partial\region}$. Then 
\begin{equation*}
    \abs{\lambda-\widehat{\lambda}} 
    \leq (C/\abs{w})\varepsilon,
\end{equation*}
where $C$ depends only on the domain $\region$.
\end{proposition}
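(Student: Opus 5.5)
The plan is to extract $\lambda$, $\widehat{\lambda}$ and the residues $w,\widehat{w}$ directly from contour integrals of $f-r$ over $\partial\region$ by the residue theorem, and then bound those integrals by $\varepsilon$ times the length of $\partial\region$. No asymptotic argument (such as Rouch\'e's theorem used in \cref{thm:convPolesIn}) is needed, because there is a single simple pole and the residue calculus gives explicit identities. Throughout I write $L$ for the length of $\partial\region$ and $\delta$ for the diameter of $\region$.

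\textbf{First identity (residues).} Since $h$ and $\widehat{h}$ are holomorphic in a neighborhood of $\overline{\region}$, the residue theorem gives
\begin{equation*}
  \frac{1}{2\pi\mathsf{i}}\oint_{\partial\region}(f-r)(z)\de z = w-\widehat{w}.
\end{equation*}
Hence $\abs{w-\widehat{w}}\leq L\varepsilon/(2\pi)$.

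\textbf{Second identity (pole location).} Multiply by $z-\lambda$. The function $f(z)(z-\lambda)=w+(z-\lambda)h(z)$ is holomorphic, so its integral over $\partial\region$ vanishes. The function $r(z)(z-\lambda)$ has residue $\widehat{w}(\widehat{\lambda}-\lambda)$ at $\widehat{\lambda}$. Therefore
\begin{equation*}
  \widehat{w}(\lambda-\widehat{\lambda}) = \frac{1}{2\pi\mathsf{i}}\oint_{\partial\region}(f-r)(z)(z-\lambda)\de z .
\end{equation*}
Since $\lambda\in\region$, we have $\abs{z-\lambda}\leq\delta$ on the contour, so $\abs{\widehat{w}}\abs{\lambda-\widehat{\lambda}}\leq L\delta\varepsilon/(2\pi)$.

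\textbf{Combining.} Write $w=\widehat{w}+(w-\widehat{w})$ and use $\abs{\lambda-\widehat{\lambda}}\leq\delta$, which holds because both points lie in $\region$. This gives
\begin{equation*}
  \abs{w}\abs{\lambda-\widehat{\lambda}}\leq \abs{\widehat{w}}\abs{\lambda-\widehat{\lambda}}+\abs{w-\widehat{w}}\,\delta\leq \frac{L\delta}{\pi}\varepsilon .
\end{equation*}
This is the claim with $C=L\delta/\pi$, which depends only on $\region$. No smallness assumption on $\varepsilon$ is required, and the degenerate case $\widehat{w}=0$ is also covered.

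\textbf{Main obstacle.} The only delicate point is that a general Jordan curve need not be rectifiable, so $L$ may be infinite. I would first handle the case of a rectifiable $\partial\region$, which is the setting of all computations in the paper (circles). For the general case, I would integrate instead over a rectifiable Jordan curve $\Gamma\subset\region$ close to $\partial\region$ that encloses both $\lambda$ and $\widehat{\lambda}$. On the closed annular region between $\Gamma$ and $\partial\region$, the function $f-r=(h-\widehat{h})+\bigl(w/(z-\lambda)-\widehat{w}/(z-\widehat{\lambda})\bigr)$ is holomorphic. The task is then to control $\sup_{\Gamma}\abs{f-r}$ by a constant times $\varepsilon$.

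The maximum modulus principle on a doubly connected region only bounds $\sup_\Gamma\abs{f-r}$ by its maximum over both boundary components, $\partial\region$ and $\Gamma$, so it does not give this control directly. I would therefore fall back on a domain-dependent constant, or simply assume rectifiability, which the statement implicitly allows by letting $C$ depend on $\region$.
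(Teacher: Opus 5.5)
Your argument is correct and is essentially the paper's; the only difference is that the paper applies the residue theorem once to $(z-\widehat{\lambda})(f-r)(z)$ instead of $(z-\lambda)(f-r)(z)$, so that $(z-\widehat{\lambda})r$ is holomorphic and the residue is directly $w(\lambda-\widehat{\lambda})$, making your auxiliary bound on $\abs{w-\widehat{w}}$ (and the extra factor $2$ in $C$) unnecessary. The paper's estimate via $\oint_{\partial\region}\abs{z-\widehat{\lambda}}\abs{\de z}$ also tacitly assumes a rectifiable boundary, but you can drop that assumption by applying the maximum modulus principle on $\region$ to $(z-\lambda)(z-\widehat{\lambda})(f-r)(z)=w(z-\widehat{\lambda})-\widehat{w}(z-\lambda)+(z-\lambda)(z-\widehat{\lambda})(h-\widehat{h})(z)$; this function is holomorphic in a neighborhood of the closure of $\region$, is bounded by $\delta^{2}\varepsilon$ on $\partial\region$, and equals $w(\lambda-\widehat{\lambda})$ at $z=\lambda$, which gives $C=\delta^{2}$ with no contour integral at all.
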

\begin{proof}
By the residue theorem, we have 
\begin{equation*}
    \frac{1}{2\pi\mi}\oint_{\partial\region} (z-\widehat{\lambda})f(z)-(z-\widehat{\lambda})r(z) \de z= (\lambda-\widehat{\lambda}) w. 
\end{equation*}
Plugging in the condition $\varepsilon=\norm{f-r}_{\partial\region}$, we finish the proof by the triangle inequality:
\begin{equation*}
    \abs{\lambda-\widehat{\lambda}} \leq \frac{\norm{f-r}_{\partial\region}}{2\pi\abs{w}}\oint_{\partial\region}\abs{z-\widehat{\lambda}}\abs{\de z}
    \leq (C/\abs{w})\varepsilon,
\end{equation*}
where $C$ depends only on the domain $\region$.
\end{proof}

A natural question arising from \cref{prop:singlepole} is whether a similar convergence result can be obtained when $f$ contains several poles within $\region$.
Furthermore, suppose we have $s$ rational functions $r_{i}$ approximating $f_{i}$ for $1 \leq i \leq s$, where the functions $\{f_{i}\}_{i=1}^{s}$ share the same poles, and likewise for $\{r_{i}\}_{i=1}^{s}$.
Can we achieve improved convergence as $s$ increases?
This situation naturally arises when applying the set-valued AAA algorithm to approximate a matrix-valued surrogate function $F$ of size $b \times b$, which provides $s = b^{2}$ scalar approximations.
The following theorem gives an affirmative answer to both of these questions.

\begin{theorem}
    \label{thm:singlepole}
    Let $\region\subset \C$ be a nonempty domain enclosed by Jordan curve. 
    Consider the following meromorphic functions (sharing their poles) for $1\leq i\leq s$:
    \begin{equation*}
        f_{i}(z) = \sum_{j=1}^{m}\frac{w_{ij}}{z-\lambda_{j}}+h_{i}(z)
        \quad\text{and}\quad 
        r_{i}(z) = \sum_{j=1}^{m}\frac{\widehat{w}_{ij}}{z-\widehat{\lambda}_{j}}+\widehat{h}_{i}(z),
    \end{equation*}
    where $\lambda_{j}$ and $\widehat{\lambda}_{j}$ are in $\region$ for $1\leq j\leq m$, and $h_{i}$ and $\widehat{h}_{i}$ are holomorphic in a neighborhood of $\region$. Let 
    \begin{equation*}
        \mathcal{F}(z) = [f_{1}(z),\dotsc,f_{s}(z)]^{\Ttran}
        \quad\text{and}\quad 
        \mathcal{R}(z) = [r_{1}(z),\dotsc,r_{s}(z)]^{\Ttran}.
    \end{equation*}
    Denote the node polynomials associated with poles $\lambda_{1},\dotsc,\lambda_{m}$ and $\widehat{\lambda}_{1},\dotsc,\widehat{\lambda}_{m}$ by 
    \begin{equation}
        \label{eq:thmpoledefp}
        p(t) = \prod_{j=1}^{m}(t-\lambda_{j}) = \sum_{j=0}^{m}p_{j}t^{j}
        \quad \text{and}\quad 
        \widehat{p}(t) = \prod_{j=1}^{m}(t-\widehat{\lambda}_{j}) = \sum_{j=0}^{m}\widehat{p}_{j}t^{j}.
    \end{equation}
    If $\varepsilon\defi \max\limits_{z\in\partial\region}\norm{\mathcal{F}(z)-\mathcal{R}(z)}$ is sufficiently small, then 
\begin{equation*}
    \norm{\mathbf{p}-\widehat{\mathbf{p}}}=\order(\varepsilon)
    \quad\text{where}\quad 
        \mathbf{p}=[p_{0},\dotsc,p_{m}]^{\Ttran}
    \quad\text{and}\quad 
        \widehat{\mathbf{p}}=[\widehat{p}_{0},\dotsc,\widehat{p}_{m}]^{\Ttran}.
\end{equation*}  
\end{theorem}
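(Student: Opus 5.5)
Generalizing the residue argument of \cref{prop:singlepole}, we test $\mathcal{F}-\mathcal{R}$ against polynomials and read off linear equations for $\mathbf{p}-\widehat{\mathbf{p}}$. For each $i$ and each $k=0,\dotsc,m-1$, consider
\begin{equation*}
    I_{ik}\defi\frac{1}{2\pi\mi}\oint_{\partial\region} z^{k}\,\widehat{p}(z)\bigl(f_{i}(z)-r_{i}(z)\bigr)\de z .
\end{equation*}
Since $\widehat{p}$ vanishes at every $\widehat{\lambda}_{j}$, the integrand $z^{k}\widehat{p}\,r_{i}$ is holomorphic in $\region$, so its contour integral is zero. For the $f_{i}$ part, the residue theorem gives
\begin{equation*}
    I_{ik}=\sum_{j=1}^{m}w_{ij}\lambda_{j}^{k}\widehat{p}(\lambda_{j}).
\end{equation*}
Because $\abs{z}$ and $\abs{\widehat{p}(z)}$ are bounded on $\partial\region$ (all $\widehat{\lambda}_{j}\in\region$, a bounded set), we get $\abs{I_{ik}}\leq C\varepsilon$ with $C$ depending only on $\region$ and $m$. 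Using $p(\lambda_{j})=0$, we have $\widehat{p}(\lambda_{j})=(\widehat{p}-p)(\lambda_{j})=\sum_{\ell=0}^{m-1}(\widehat{p}_{\ell}-p_{\ell})\lambda_{j}^{\ell}$; note that $\widehat{p}_{m}=p_{m}=1$.

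Stacking all of this yields a linear system
\begin{equation*}
    W^{\Ttran}\,\mathrm{diag}\bigl(\widehat{p}(\lambda_{1}),\dotsc,\widehat{p}(\lambda_{m})\bigr)\,\van^{\Ttran}\text{-type matrix}\;=\;\order(\varepsilon),
\end{equation*}
where $W=[w_{ij}]\in\C^{s\times m}$ and $\van=[\lambda_{j}^{k}]$ is the $m\times m$ Vandermonde matrix, invertible because the $\lambda_{j}$ are distinct. Concretely, the vector $\mathbf{y}\defi(\widehat{p}(\lambda_{j}))_{j}$ satisfies $\bigl[w_{ij}\lambda_{j}^{k}\bigr]_{(i,k),j}\,\mathbf{y}=\order(\varepsilon)$. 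The coefficient matrix is $(W\ast\van)$, a face-splitting (row-wise Khatri--Rao) product of size $sm\times m$. We need it to have full column rank $m$, so that $\norm{\mathbf{y}}\leq C'\varepsilon/\sigma_{\min}$. Full column rank holds iff every column $w_{:,j}$ of $W$ is nonzero, i.e.\ every $\lambda_{j}$ is a genuine pole. Indeed, with $D_{i}=\diagM{w_{i1},\dotsc,w_{im}}$ the block rows are $\van D_{i}$, and $\sum_{i}D_{i}^{\Htran}\van^{\Htran}\van D_{i}\succeq \sigma_{\min}(\van)^{2}\,\diagM{\norm{w_{:,1}}^{2},\dotsc,\norm{w_{:,m}}^{2}}$. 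This bound also exhibits the improvement with $s$: the relevant quantity is $\min_{j}\norm{w_{:,j}}$, not $\min_{ij}\abs{w_{ij}}$, which is the payoff of block probing and reduces to $\abs{w}$ in \cref{prop:singlepole}. Finally, $\mathbf{y}=\van^{\Ttran}(\widehat{\mathbf{p}}-\mathbf{p})_{0:m-1}$, so inverting the Vandermonde matrix gives $\norm{\mathbf{p}-\widehat{\mathbf{p}}}\leq \norm{\van^{-1}}\,\norm{\mathbf{y}}=\order(\varepsilon)$.

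The main obstacle is bookkeeping the hidden constant so that the bound is genuinely linear in $\varepsilon$ with constants independent of $\widehat{\lambda}_{j}$. Two points need care. First, $\max_{\partial\region}\abs{\widehat{p}}$ is bounded by $\mathrm{diam}(\region)^{m}$ uniformly, since the $\widehat{\lambda}_{j}$ lie in $\region$. Second, the only place where smallness of $\varepsilon$ is needed is to make sense of the pairing and the distinct-root setup. Here I would note that the argument above does not actually require $\widehat{\lambda}_{j}$ to be close to $\lambda_{j}$, so the ``sufficiently small'' hypothesis serves mainly to guarantee that the constant, which depends on $\sigma_{\min}(\van)$ and $\min_{j}\norm{w_{:,j}}$ through $\lambda_{j}$ and $w_{ij}$ only, is the one that governs. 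A subtlety to check is whether $\widehat{\lambda}_{j}$ may coincide; this is harmless, because $\widehat{p}$ still kills all poles of $r_{i}$.
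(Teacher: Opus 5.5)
Your proof is correct, and it takes a genuinely different route from the paper's. The paper computes the contour moments of $\mathcal{F}$ and $\mathcal{R}$ separately for $0\le k\le 2m-1$ and stacks them into Hankel matrices $\han$ and $\widehat{\han}$ with $\han\mathbf{p}=0$, $\widehat{\han}\widehat{\mathbf{p}}=0$ and $\norm{\han-\widehat{\han}}=\order(\varepsilon)$. It then concludes with Weyl's inequality and Wedin's singular-vector perturbation theorem. The smallness of $\varepsilon$ is used exactly there: it keeps the second smallest singular value of $\widehat{\han}$ positive, so that $\widehat{\mathbf{p}}$ spans a one-dimensional null space.

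You instead lift the proof of \cref{prop:singlepole}. Weighting by $z^{k}\widehat{p}$ cancels the poles of $\mathcal{R}$ before integrating, so you get a linear system for $\widehat{\mathbf{p}}-\mathbf{p}$ directly, with no subspace perturbation theory. Your coefficient matrix $[w_{ij}\lambda_{j}^{k}]$ is, up to a row permutation, the left factor $[W;W\Lambda;\dotsc;W\Lambda^{m-1}]$ in the paper's factorization of $\han$. So the same structural quantity governs both proofs.

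What your route buys:
\begin{itemize}
\item It makes the block-probing gain explicit through $\min_{j}\norm{w_{:,j}}$.
\item It gives an explicit constant of order $\bigl(\sigma_{\min}(\van)^{2}\min_{j}\norm{w_{:,j}}\bigr)^{-1}$, times a factor depending only on $\region$ and $m$.
\item It reduces exactly to \cref{prop:singlepole} when $m=s=1$.
\item It holds for every $\varepsilon$. The hedging in your last paragraph (``needed to make sense of the pairing'') is unnecessary, since smallness of $\varepsilon$ is used nowhere in your argument.
\end{itemize}

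What the paper's route buys: it works only with moments of $\mathcal{F}$ and $\mathcal{R}$ individually, in the spirit of contour-integral eigensolvers. It also exposes the factorization Hankel $=$ (weighted Vandermonde)$\,\times\,$(Vandermonde), which the paper uses afterwards to compare single-vector and block probing.

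Two small clean-ups. First, replace the garbled ``$\van^{\Ttran}$-type matrix'' display with your concrete face-splitting statement. Second, state as hypotheses that the $\lambda_{j}$ are distinct and that every column $w_{:,j}$ is nonzero. The paper's proof relies on the same conditions implicitly, since otherwise $\han$ does not have rank $m$.
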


The convergence of $\widehat{\mathbf{p}}$ to $\mathbf{p}$ implies the convergence of poles, although their dependence may be arbitrarily ill-conditioned. This can be obtained by applying the Bauer--Fike theorem \cite{Bauer1960} to the companion matrices associated with monic polynomials $\widehat{p}$ and $p$.

\begin{proof}[Proof of \cref{thm:singlepole}]
    Consider the $k$-th order moments for $0\leq k\leq 2m-1$:
    \begin{equation*}
        M_{k}\defi\frac{1}{2\pi\mi}\oint_{\partial\region}z^{k}\mathcal{F}(z)\de z = \frac{1}{2\pi\mi}
        \oint_{\partial\region}z^{k}\begin{bmatrix}
            f_{1}(z)\\ 
            \vdots\\ 
            f_{s}(z)
        \end{bmatrix}\de z = \sum_{j=1}^{m}\begin{bmatrix}
            w_{1j}\\ 
            \vdots\\ 
            w_{sj}
        \end{bmatrix}\lambda_{j}^{k}
        =W\begin{bmatrix}
            \lambda_{1}^{k}\\ 
            \vdots\\ 
            \lambda_{m}^{k}
        \end{bmatrix}\in\C^{s},
    \end{equation*}
    where $W\in\C^{s\times m}$ consists of $w_{ij}$.
    Defining $\widehat{M}_{k}$ as the moment related to $r_{i}$ similarly, the triangle inequality gives $\norm{M_{k}-\widehat{M}_{k}}=\order(\varepsilon)$:
    \begin{equation*}
        \norm{M_{k}-\widehat{M}_{k}} = \frac{1}{2\pi}\Bignorm{\oint_{\partial\region}z^{k}\Bigl(\mathcal{F}(z)-\mathcal{R}(z)\Bigr)\de z} \leq \frac{\norm{\mathcal{F}(z)-\mathcal{R}(z)}_{\partial\region}}{2\pi}\oint_{\partial\region}\abs{z}^{k}\abs{\de z}=\order(\varepsilon).
    \end{equation*}
    Let $\van$ and $\widehat{\van}$ be Vandermonde matrices related to the poles:
    \begin{equation*}
        \van \defi \begin{bmatrix}
            1 & \lambda_{1} &\cdots &\lambda_{1}^{m}\\ 
            1 & \lambda_{2} &\cdots &\lambda_{2}^{m}\\ 
            \vdots &\vdots &\ddots &\vdots \\ 
            1 & \lambda_{m} &\cdots &\lambda_{m}^{m} 
        \end{bmatrix}
        \quad\text{and}\quad 
        \widehat{\van} \defi \begin{bmatrix}
            1 & \widehat{\lambda}_{1} &\cdots &\widehat{\lambda}_{1}^{m}\\ 
            1 & \widehat{\lambda}_{2} &\cdots &\widehat{\lambda}_{2}^{m}\\ 
            \vdots &\vdots &\ddots &\vdots \\ 
            1 & \widehat{\lambda}_{m} &\cdots &\widehat{\lambda}_{m}^{m} 
        \end{bmatrix}\in\C^{m\times (m+1)}.
    \end{equation*}
    Define $\Lambda=\diagM{\lambda_{1},\dotsc,\lambda_{m}}$ and $\widehat{\Lambda}=\diagM{\widehat{\lambda}_{1},\dotsc,\widehat{\lambda}_{m}}$.
    Assembling the moments we obtain
    \begin{equation*}
        \begin{bmatrix}
            W\\ 
            W\Lambda\\ 
            \vdots\\ 
            W\Lambda^{m-1} 
        \end{bmatrix}\van = \begin{bmatrix}
            M_{0}&M_{1}&\cdots &M_{m}\\ 
            M_{1}&M_{2}&\cdots &M_{m+1}\\ 
            \vdots &\vdots &\ddots &\vdots\\ 
            M_{m-1}&M_{m}&\cdots & M_{2m-1}
        \end{bmatrix} =: \han\in\C^{ms\times (m+1)}. 
    \end{equation*}
    Now let us consider the node polynomials associated with poles in $\region$. By the definition of $p$ in \cref{eq:thmpoledefp}, we have 
    \begin{equation*}
        \han\cdot\mathbf{p} = \begin{bmatrix}
            W\\ 
            W\Lambda\\ 
            \vdots\\ 
            W\Lambda^{m-1} 
        \end{bmatrix}\van \cdot\mathbf{p} = \begin{bmatrix}
            W\\ 
            W\Lambda\\ 
            \vdots\\ 
            W\Lambda^{m-1} 
        \end{bmatrix}\begin{bmatrix}
            p(\lambda_{1})\\ 
            \vdots\\ 
            p(\lambda_{m})
        \end{bmatrix}   =0,
    \end{equation*}
    and similarly $\widehat{\han}\widehat{\mathbf{p}} = 0$, 
    where $\widehat{\han}$ is defined similarly to $\han$ with $f_{i}$ replaced by $r_{i}$. Recalling that $\norm{M_{k}-\widehat{M}_{k}}=\order(\varepsilon)$ and $\han$ and $\widehat{\han}$ are formed by stacking $M_{k}$ and $\widehat{M}_{k}$, respectively, we see that $\norm{\han-\widehat{\han}}=\order(\varepsilon)$.
    For a sufficiently small $\varepsilon$, we know that the second smallest singular value of $\han$ satisfies $\sigma_{m}\geq \norm{\han-\widehat{\han}}$. In turn, Weyl's inequality yields that  the second smallest singular value of $\widehat{\han}$ is also positive. Thus, $\mathbf{p}$ and $\widehat{\mathbf{p}}$ are (right) singular vectors corresponding to the smallest singular values (which are zero) of $\han$ and $\widehat{\han}$, respectively. Then the theorem is proved by the perturbation result of singular vectors by Wedin \cite{Wedin1972}.
\end{proof}

The condition $\varepsilon$ being sufficiently small (such that $\sigma_{m}\geq \norm{\han-\widehat{\han}}=\order(\varepsilon)$) is a very strong assumption when $m$ is (moderately) large, since the smallest singular value of a Vandermonde matrix typically behaves as $\order(\gap^{m-1})$ \cite{Gautschi1962,beckermann2000condition}, where $\gap$ denotes the minimal relative gap among $\lambda_{1}, \dotsc, \lambda_{m}$.
Using a single probing vector makes the situation even worse, as a (nearly) square Hankel matrix effectively involves two Vandermonde matrices:
\begin{equation*}
    \begin{aligned}
        \han &= \begin{bmatrix}
        w_{11} &w_{12}& \cdots & w_{1m}\\ 
        w_{11}\lambda_{1} &w_{12}\lambda_{2}& \cdots & w_{1m}\lambda_{m}\\
        \vdots & \vdots & \ddots &\vdots \\ 
        w_{11}\lambda_{1}^{m-1}& w_{12}\lambda_{2}^{m-1} &\cdots & w_{1m}\lambda_{m}^{m-1}
    \end{bmatrix}\van \\
    &= \Bigl(\van\cdot \begin{bmatrix}
        I_{m}\\ 0
    \end{bmatrix}\Bigr)^{\Ttran}\diagM{w_{11},\dotsc,w_{1m}}\van.
    \end{aligned}
\end{equation*}
One possible way to weaken this assumption is by representing the polynomials $p$ and $\widehat{p}$ using alternative polynomials, such as Chebyshev polynomials.

Besides the probabilistic analysis in \cref{thm:prob}, \cref{thm:singlepole} provides an alternative explanation for the difference between sketching with a single vector and with multiple vectors: block probing improves the first Vandermonde matrix. In fact, this improvement is closely related to the so-called cluster robustness of the (randomized) small-block Lanczos method, whose analysis is intricate even when all $\lambda_{j}$ are real \cite{shao2025structural,chen2025does}. Empirical results indicate that a small block size, say $4$, is sufficient to significantly enhance the cluster robustness.

The key insight from \cref{thm:singlepole} is that when $m$, the number of poles within $\region$, is large, the accuracy of the approximate poles is expected to deteriorate, unless the poles of $f$ follow certain special distributions, such as being uniformly placed on a circle. 
In contrast, the zooming-in technique mitigates this by partitioning $\region$, thereby reducing $m$ in each subdomain and significantly enhancing the accuracy of the computed results.

As a numerical example, we compute all $61$ eigenvalues in $\region\defi \{a+b\mi\mid 0.2\leq a \leq 1.2\text{ and } 0.1\leq b\leq 1.1\}$ of the nonlinear problem \texttt{butterfly} from NLEVP \cite{Betcke2013}, which is a $64\times 64$ quartic eigenvalue problem.
Suppose the domain $\region$ is divided into $N^{2}$ small subdomains, as shown in \cref{fig:butterflydm}, where each subdomain is a square of size $1/N \times 1/N$. We sample 200 uniformly distributed points along the boundary of each subdomain and compute the eigenvalues within each subdomain using the poles of the rational approximation obtained from the (set-valued) AAA algorithm.
The results obtained from \texttt{polyeig}, which is based on the QZ method for a linearized generalized eigenvalue problem, are used as the reference solution. The error is measured as the maximum distance between the computed eigenvalues and the reference solution.

\begin{figure}[htbp]
    \centering
    \includegraphics[width=\figsizeD]{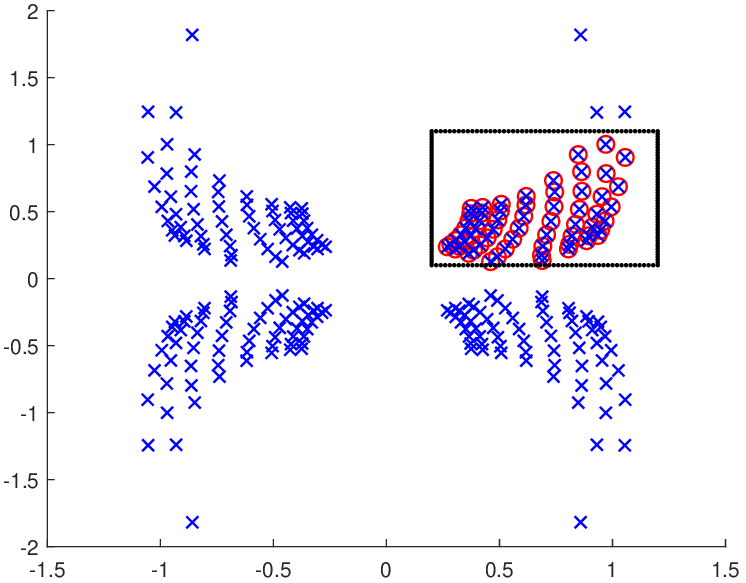}
    \includegraphics[width=\figsizeD]{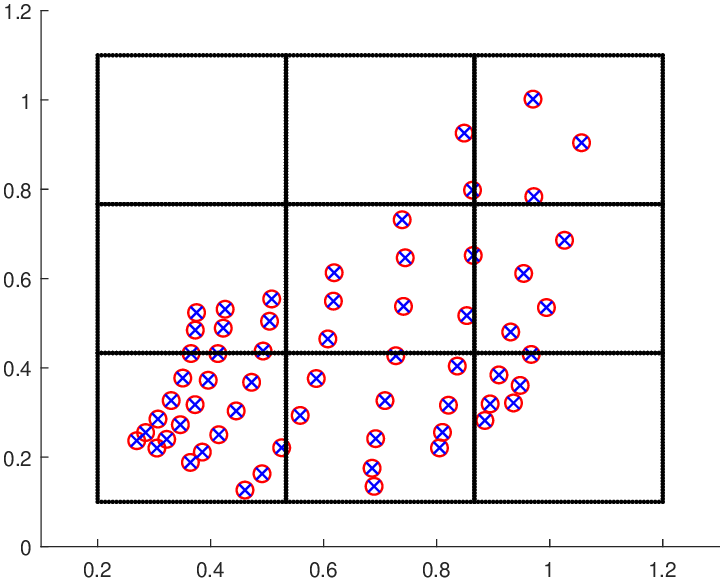}
    \caption{Eigenvalues of butterfly problem (left) and subdomains for $N=3$ (right).}
    \label{fig:butterflydm}    
\end{figure}

Varying the block size of probing matrices over $1\leq b\leq 3$ and the parameter for dividing domains over $1\leq N\leq 5$, we collect numerical results in \cref{fig:butterfly}. We see that zooming in can substantially improve the accuracy, as predicted by the theory.
Rational approximations remain accurate when only a few well-separated poles lie within the domain.
By zooming in, pole clusters are separated, effectively reducing the number of poles per domain and improving the overall accuracy.

\begin{figure}[htbp]
    \centering
    \includegraphics[width=\textwidth]{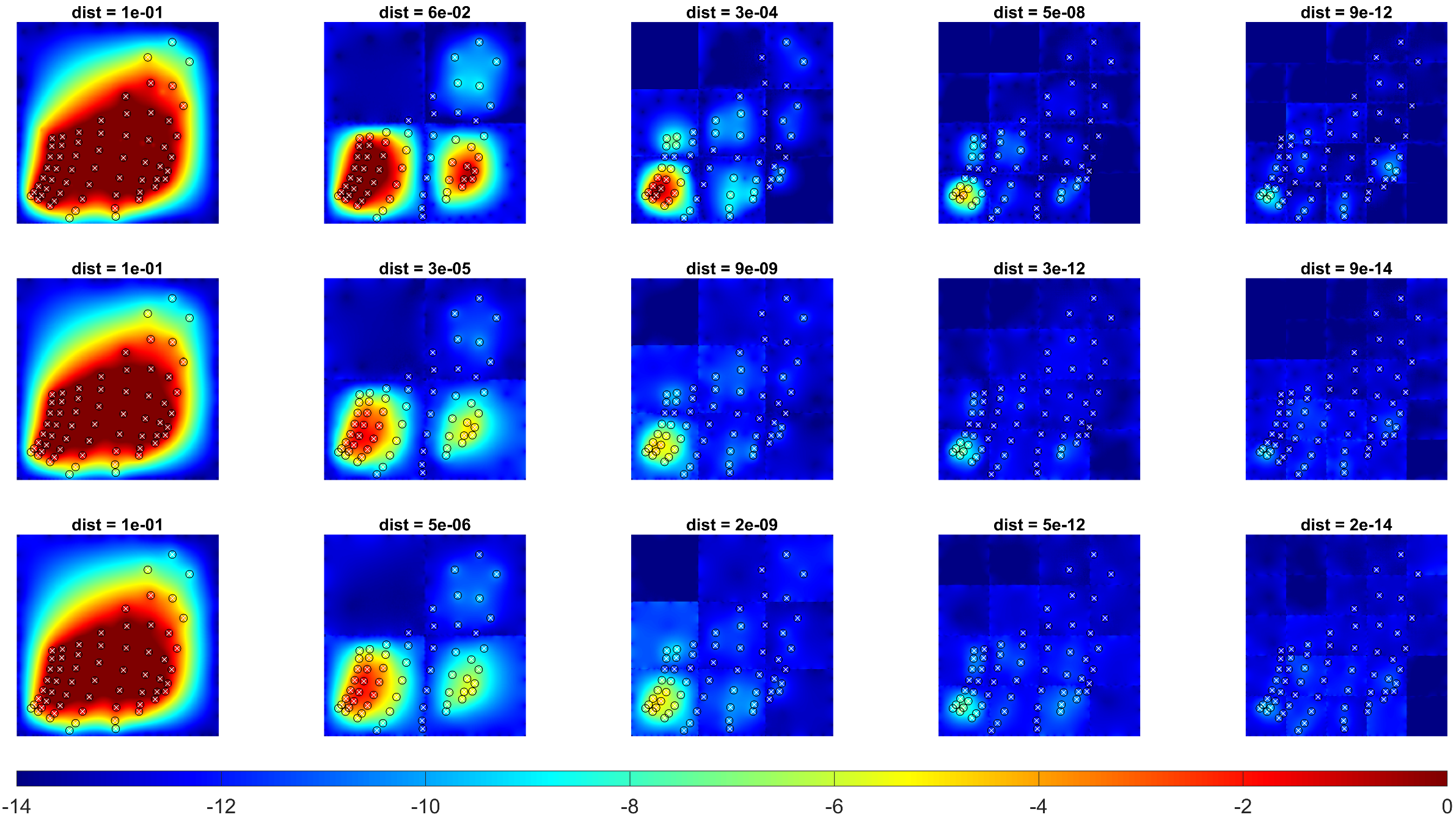}
    \caption{\texttt{Butterfly} eigenvalue problems. The $(b, N)$ subfigure represents the computational result obtained using block probing of size $b$ with sampling over subdomains of size $1/N$. The heat map shows the logarithmic error between $F$ and its rational approximation, while ``dist'' denotes the maximum error in the computed eigenvalues.} 
    \label{fig:butterfly}    
\end{figure}

\section{Stability of polefinding via a generalized eigenproblem}
\label{sec:stability}
Suppose that we have a good rational approximation $R$ in the barycentric form \cref{eq:defR}. This section discusses the stability of polefinding.
Even though the function $R$ defined in \cref{eq:defR} is matrix-valued, its denominator is scalar. Since the support points $z_{k}$ are removable singularities of $R$, and assuming that the scalar denominator and the entries of the matrix-valued numerator have no nonconstant common factor, the poles of $R$ are precisely the zeros of its denominator, which is a scalar function:
\begin{equation}
    \label{eq:defbc}
    q(z) = \sum_{k=1}^{K}\frac{w_{k}}{z-z_{k}},\quad\text{where}\quad \sum_{k=1}^{K}\abs{w_{k}}^{2}=1.
\end{equation}
In the following, we look into the stability of rootfinding for $q$.
\begin{remark}
    The stability results in this section are concerned with the more general problem of polefinding from a rational function in barycentric form \cref{eq:defR}, or equivalently, the rootfinding of $q$ in \cref{eq:defbc}. They are not necessarily related to the (set-valued) AAA method.
\end{remark}
\subsection{Backward stability of rootfinding} We first define the backward stability of rootfinding from a barycentric form.
\begin{definition}
    Given the barycentric form \cref{eq:defbc} of a rational function $q$, we say a computed root $\widehat{\lambda}$ is (backward) stable if there exist $\widehat{w}_{k}$ for $1\leq k\leq K$ satisfying $\abs{w_{k}-\widehat{w}_{k}}=\order(\mpr)$, such that 
    \begin{equation*}
        \widehat{q}(\widehat{\lambda})\defi \sum_{k=1}^{K}\frac{\widehat{w}_{k}}{\widehat{\lambda}-z_{k}} = 0.
    \end{equation*}
\end{definition}

As discussed in \cite[Sec.~2.3.3]{klein2012}, roots from a barycentric form \cref{eq:defbc} can be recovered by the finite generalized eigenvalues of the following matrix pencil:
\begin{equation}
    \label{eq:defgevp}
    \begin{bmatrix}
        0 & w_{1} & w_{2} & \cdots & w_{K}\\ 
        1 & z_{1}&&& \\ 
        1& & z_{2}&&\\ 
        \vdots &&&\ddots &\\ 
        1 &&&&z_{K}
    \end{bmatrix}-\lambda \begin{bmatrix}
        0&&&&\\
        &1&&&\\
        &&1&&\\
        &&&\ddots&\\
        &&&&1
    \end{bmatrix}
\end{equation}
The following theorem shows that, if the generalized eigenvalues of \cref{eq:defgevp} are solved in a backward stable manner such as by the QZ algorithm~\cite{moler1973algorithm}, that is, the computed eigenvalues of a matrix pencil $A-\lambda B$ are the exact eigenvalues of $(A+\Delta A)-\lambda(B+\Delta B)$ for small $\Delta A,\Delta B$, under mild assumptions, the computed roots are backward stable.

\begin{theorem}
    \label{thm:bs}
    Given a barycentric form of $q$ as in \cref{eq:defbc} with $\abs{z_{k}}=\order(1)$ for all $1\leq k\leq K$. Let $\widehat{\lambda}$ be a computed generalized eigenvalue of \cref{eq:defgevp} from a backward stable method. 
    If there exists an integer $\widehat{k}$ such that $1\leq \widehat{k}\leq K$ and 
    \begin{equation}
        \label{eq:aspbs}
        \min_{1\leq k\neq \widehat{k}\leq K} \abs{\widehat{\lambda}-z_{k}}=\Theta(1),
    \end{equation}
    then $\widehat{\lambda}$ is a backward stable root of $q$.
\end{theorem}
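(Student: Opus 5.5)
We work directly with the perturbed pencil. A backward stable method returns $\widehat{\lambda}$ as an exact generalized eigenvalue of $(A+\Delta A)-\lambda(B+\Delta B)$, where $A,B$ are the matrices in \cref{eq:defgevp} and $\norm{\Delta A},\norm{\Delta B}=\order(\mpr)\norm{[A,B]}$. Since $\sum_k\abs{w_k}^2=1$ and $\abs{z_k}=\order(1)$, we have $\norm{A},\norm{B}=\order(1)$, so the perturbations are $\order(\mpr)$ in absolute terms. The plan is to take an eigenvector $[v_0;v]$ of the perturbed pencil at $\widehat{\lambda}$ and use the rows of $(A+\Delta A-\widehat{\lambda}(B+\Delta B))[v_0;v]=0$ to construct perturbed weights $\widehat{w}_k$ with $\widehat{q}(\widehat{\lambda})=0$.

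\textbf{Step 1: the arrowhead rows.} Row $k+1$ reads
\[
v_0(1+\delta_{k0})+(z_k-\widehat{\lambda})v_k+\sum_j E_{kj}v_j=0,
\]
where $E$ gathers the small entries of $\Delta A-\widehat{\lambda}\Delta B$, all of size $\order(\mpr)$ because $\abs{\widehat{\lambda}}=\order(1)$ by \eqref{eq:aspbs}. For $k\neq\widehat{k}$ we can divide by $z_k-\widehat{\lambda}=\Theta(1)$. This gives $v_k=v_0/(\widehat{\lambda}-z_k)+\order(\mpr)\norm{[v_0;v]}$. Normalize $\norm{[v_0;v]}=1$.

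\textbf{Step 2: rule out the degenerate case and find the dominant component.} We need to show that either $v_0$ is not tiny, or the eigenvector concentrates on $v_{\widehat{k}}$. Suppose $\abs{v_0}\ll1$. Then all $v_k$ with $k\neq\widehat{k}$ are small, so $\abs{v_{\widehat{k}}}\approx1$. Row $\widehat{k}+1$ then forces $\abs{z_{\widehat{k}}-\widehat{\lambda}}$ to be small, relative to $v_0$.

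\textbf{Step 3: use the first row.} The first row gives
\[
\sum_k(w_k+\eta_k)v_k+\eta_0v_0=0,\qquad \eta=\order(\mpr).
\]
We then define $\widehat{w}_k$ so that $\sum_k\widehat{w}_k/(\widehat{\lambda}-z_k)=0$ exactly. The natural choice is $\widehat{w}_k=(w_k+\eta_k)(\widehat{\lambda}-z_k)v_k/v_0$, corrected so that the perturbation is $\order(\mpr)$.

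For $k\neq\widehat{k}$, Step 1 gives $(\widehat{\lambda}-z_k)v_k/v_0=1+\order(\mpr/\abs{v_0})$. For $k=\widehat{k}$, the exact row equation expresses $(\widehat{\lambda}-z_{\widehat{k}})v_{\widehat{k}}$ as $v_0$ plus small terms. We absorb the leftover residual $\eta_0v_0$ and the row perturbations into the $\widehat{k}$-th weight. This works because the $\widehat{k}$-th term is free: its coefficient $1/(\widehat{\lambda}-z_{\widehat{k}})$ may be large, so an $\order(\mpr)$ change in $\widehat{w}_{\widehat{k}}$ can cancel any residual of size $\order(\mpr)/\abs{\widehat{\lambda}-z_{\widehat{k}}}$. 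Concretely, we set $\widehat{w}_k=w_k+\order(\mpr)$ for $k\neq\widehat{k}$ and solve for $\widehat{w}_{\widehat{k}}$ from $\widehat{q}(\widehat{\lambda})=0$. We then verify that $\abs{\widehat{w}_{\widehat{k}}-w_{\widehat{k}}}=\order(\mpr)$ by multiplying the residual by $\widehat{\lambda}-z_{\widehat{k}}$ and using row $\widehat{k}+1$.

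\textbf{Main obstacle.} The hard part is the bookkeeping when $\abs{v_0}$ is small, that is, when $\widehat{\lambda}$ is very close to $z_{\widehat{k}}$. Dividing by $v_0$ could then amplify errors. The plan is to avoid dividing by $v_0$ altogether by multiplying the first-row identity through by $(\widehat{\lambda}-z_{\widehat{k}})$. All terms are then expressed via row $\widehat{k}+1$ as $v_0$ times $\order(1)$ plus $\order(\mpr)\abs{v_{\widehat{k}}}$. We would then show the resulting equation is a scaled version of $\widehat{q}(\widehat{\lambda})(\widehat{\lambda}-z_{\widehat{k}})=0$ with weight perturbations of size $\order(\mpr)$. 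This case analysis, together with the separation assumption \eqref{eq:aspbs}, yields the claim.
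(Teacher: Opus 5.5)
There is a genuine gap, and it sits exactly in the case you flag as the main obstacle. Your ingredients are the right ones: treat the pencil rows as $\order(\mpr)$ residual equations, divide by $\widehat{\lambda}-z_{k}=\Theta(1)$ for $k\neq\widehat{k}$, and absorb everything into the single weight $w_{\widehat{k}}$. But the mechanism you propose for small $v_{0}$ does not work. Multiplying the first row by $(\widehat{\lambda}-z_{\widehat{k}})$ and rewriting every term through $v_{0}$ via row $\widehat{k}+1$ produces
\[
v_{0}\Bigl(w_{\widehat{k}}+(\widehat{\lambda}-z_{\widehat{k}})\sum_{k\neq\widehat{k}}\frac{w_{k}}{\widehat{\lambda}-z_{k}}\Bigr)=\order(\mpr).
\]
The right-hand side contains $w_{\widehat{k}}$ times the residual of row $\widehat{k}+1$. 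That residual is $\order(\mpr)$ but not $\order(\mpr)\abs{v_{0}}$, since it contains $E_{\widehat{k}\widehat{k}}v_{\widehat{k}}$ with $\abs{v_{\widehat{k}}}\approx 1$ in the degenerate regime. Converting this into a weight perturbation therefore still divides by $v_{0}$. It only gives $\abs{\widehat{w}_{\widehat{k}}-w_{\widehat{k}}}=\order(\mpr/\abs{v_{0}})$, which is useless precisely when $\widehat{\lambda}$ is close to $z_{\widehat{k}}$. The same division by $v_{0}$ is hidden in your Step~3 estimate $1+\order(\mpr/\abs{v_{0}})$, so the ``case analysis'' you invoke is never actually closed.

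The missing idea is to substitute in the opposite direction and pivot on $v_{\widehat{k}}$ instead of $v_{0}$. Row $\widehat{k}+1$ gives $v_{0}=(\widehat{\lambda}-z_{\widehat{k}})v_{\widehat{k}}+\order(\mpr)$. Hence $v_{k}=\frac{\widehat{\lambda}-z_{\widehat{k}}}{\widehat{\lambda}-z_{k}}\,v_{\widehat{k}}+\order(\mpr)$ for $k\neq\widehat{k}$. Because $\abs{\widehat{\lambda}-z_{\widehat{k}}}=\order(1)$ and $\abs{\widehat{\lambda}-z_{k}}=\Theta(1)$, every component of the unit eigenvector is $\order(1)\cdot v_{\widehat{k}}+\order(\mpr)$. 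Normalization then forces $v_{\widehat{k}}=\Theta(1)$ unconditionally, so no case split on $v_{0}$ is needed. Substituting into the first row gives
\[
\Bigl(w_{\widehat{k}}+(\widehat{\lambda}-z_{\widehat{k}})\sum_{k\neq\widehat{k}}\frac{w_{k}}{\widehat{\lambda}-z_{k}}\Bigr)v_{\widehat{k}}=\order(\mpr).
\]
Dividing by $v_{\widehat{k}}$ lets you set $\widehat{w}_{\widehat{k}}=w_{\widehat{k}}+\order(\mpr)$, keep all other weights, and obtain $\widehat{q}(\widehat{\lambda})=0$. This is exactly the paper's argument.
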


The condition $\abs{z_{k}}=\order(1)$ is not essential since we can always shift or rescale the support points before computing the rational approximation. 
The aim of this condition is to obtain an $\order(\mpr)$ forward error of the computed eigenvectors from the backward stability of eigensolvers.
By contrast, the condition $\abs{\widehat{\lambda}-z_{k}}=\Theta(1)$ is indeed a genuine assumption for the backward stability. 
It permits one support point $z_{\widehat{k}}$ to be close to the computed eigenvalue $\widehat{\lambda}$, while requiring all other support points to be sufficiently distant from it. 
The unfavorable situation, where $\widehat{\lambda}$ lies close to two support points, can be avoided by choosing well-separated support points $\{z_{k}\}$.

\begin{proof}[Proof of \cref{thm:bs}]
    Let $\widehat{x}=[\widehat{x}_{0},\widehat{x}_{1},\dotsc,\widehat{x}_{K}]^{\Ttran}$ be the corresponding computed (unit-norm) eigenvector. By the backward stability of the eigensolver, we have 
    \begin{equation}
        \label{eq:pfbs}
        \mpr_{0}\defi \sum_{k=1}^{K}w_{k}\widehat{x}_{k}=\order(\mpr)\quad \text{and}\quad  \mpr_{k}\defi \widehat{x}_{0}+z_{k}\widehat{x}_{k}-\widehat{\lambda}\widehat{x}_{k}=\order(\mpr)\quad \forall\,1\leq k\leq K.
    \end{equation}
    Thus, for $1\leq k\neq \widehat{k}\leq K$, the second relation in \cref{eq:pfbs} yields 
    \begin{equation}
        \label{eq:pfbs1}
        \widehat{x}_{k} = \frac{\widehat{x}_{0}-\mpr_{k}}{\widehat{\lambda}-z_{k}} 
        = \frac{\widehat{x}_{0}-\mpr_{\widehat{k}}}{\widehat{\lambda}-z_{k}}+\frac{\mpr_{\widehat{k}}-\mpr_{k}}{\widehat{\lambda}-z_{k}} 
        = \frac{\widehat{\lambda}-z_{\widehat{k}}}{\widehat{\lambda}-z_{k}}\cdot \widehat{x}_{\widehat{k}}+\frac{\mpr_{\widehat{k}}-\mpr_{k}}{\widehat{\lambda}-z_{k}}.
    \end{equation}
    By the assumption \cref{eq:aspbs} and $\abs{z_{k}}=\order(1)$, we know that 
    \begin{equation*}
        \frac{\widehat{\lambda}-z_{\widehat{k}}}{\widehat{\lambda}-z_{k}} = \order(1)\quad\text{and}\quad 
        \frac{\mpr_{\widehat{k}}-\mpr_{k}}{\widehat{\lambda}-z_{k}}=\order(\mpr).
    \end{equation*}
    In turn, the normalization condition $\norm{\widehat{x}}=1$ implies that $\widehat{x}_{\widehat{k}}=\Theta(1)$.
    Plugging \cref{eq:pfbs1} into the first relation in \cref{eq:pfbs}, we obtain 
    \begin{equation*}
        \Bigl(w_{\widehat{k}}+(\widehat{\lambda}-z_{\widehat{k}})\sum_{k\neq \widehat{k}}\frac{w_{k}}{\widehat{\lambda}-z_{k}} \Bigr)\widehat{x}_{\widehat{k}}
        = \widehat{\mpr}\defi \mpr_{0}-\sum_{k\neq \widehat{k}}w_{k}\cdot \frac{\mpr_{\widehat{k}}-\mpr_{k}}{\widehat{\lambda}-z_{k}}=\order(\mpr).
    \end{equation*}
    Define 
    \begin{equation*}
        \widehat{w}_{\widehat{k}}\defi w_{\widehat{k}}-\widehat{\mpr}/\widehat{x}_{\widehat{k}}=w_{\widehat{k}}+\order(\mpr)
        \quad\text{and}\quad  
        \widehat{w}_{k}=w_{k}\quad\text{for}\quad 1\leq k\neq\widehat{k}\leq K.
    \end{equation*}
    We finish the proof as follows:
    \begin{equation*}
        \begin{aligned}
            \sum_{k=1}^{K}\frac{\widehat{w}_{k}}{\widehat{\lambda}-z_{k}} &= \frac{1}{\widehat{\lambda}-z_{\widehat{k}}} \Bigl(\widehat{w}_{\widehat{k}}+(\widehat{\lambda}-z_{\widehat{k}})\sum_{k\neq\widehat{k}}\frac{\widehat{w}_{k}}{\widehat{\lambda}-z_{k}}\Bigr)\\ 
        &= \frac{1}{\widehat{\lambda}-z_{\widehat{k}}} \Bigl(w_{\widehat{k}}+(\widehat{\lambda}-z_{\widehat{k}})\sum_{k\neq\widehat{k}}\frac{w_{k}}{\widehat{\lambda}-z_{k}}-\widehat{\mpr}/\widehat{x}_{\widehat{k}}\Bigr)=0.       
        \end{aligned}
    \end{equation*}

\end{proof}

Now we provide a numerical example to show that, without the condition \cref{eq:aspbs}, the computed eigenvalue $\widehat{\lambda}$ can be backward unstable. Let 
\begin{equation*}
    \widehat{z} = \Bigl[\frac{1}{\widehat{\lambda}-z_{1}},\dotsc,\frac{1}{\widehat{\lambda}-z_{K}}\Bigr]^{\Ttran}\quad\text{and}\quad w = [w_{1},\dotsc,w_{K}]^{\Htran}.
\end{equation*} 
Then the backward stability is equivalent to the existence of a perturbation $\delta w\in\C^{K}$ with $\norm{\delta w}=\order(\mpr)$ such that $(w+\delta w)^{\Htran}\widehat{z}=0$. We can actually write down the minimal perturbation $\delta w$ and its norm as 
\begin{equation*}
    \delta w = -\frac{\widehat{z}\widehat{z}^{\Htran}w}{\norm{\widehat{z}}^{2}}\quad\text{and}\quad \norm{\delta w} = \frac{\abs{\widehat{z}^{\Htran}w}}{\norm{\widehat{z}}},
\end{equation*}
which can be computed numerically once we know $\widehat{\lambda}$, the computed eigenvalue. Consider the following example: $z_{1}=1$, $z_{2}=1+\alpha^{-1}$,
\begin{equation*}
    z_{k}=\frac{k}{\alpha}\quad\text{for}\quad  k=3,\dotsc,K\quad\text{and}\quad w =\frac{[1,10^{-1},10^{-12},\dotsc,10^{-12}]^{\Ttran}}{\norm{[1,10^{-1},10^{-12},\dotsc,10^{-12}]}}\in\C^{K}.
\end{equation*}
We set $K=100$, vary $\alpha$ over $\{10^{3},10^{4},\dotsc,10^{9}\}$, and solve the generalized eigenvalue problem by \texttt{eig} in Matlab. Note that for large $\alpha$, there exists an eigenvalue $\widehat{\lambda}$ close to both $z_{1}$ and $z_{2}$ since $\abs{z_{1}-z_{2}}=1/\alpha$. So we look into the behavior of this computed eigenvalue. The minimal perturbation $\norm{\delta w}$ and second smallest distance to support points $\min_{k\neq \widehat{k}}\abs{z_{k}-\widehat{\lambda}}$ are collected in  \cref{tab:bs}, showing that when there are two close support points, this procedure is not backward stable, and the backward error is inversely proportional to the gap $\min_{k\neq \widehat{k}}\abs{z_{k}-\widehat{\lambda}}$.

\begin{table}[htbp]
    \caption{Backward error of rootfinding.}
    \label{tab:bs}
    \centering
    \begin{tabular}{l|cccccccc}
        \toprule
        $\alpha$ &$10^{3}$& $10^{4}$ & $10^{5}$ &$10^{6}$ & $10^{7}$& $10^{8}$& $10^{9}$ \\ \midrule
        $\min\norm{\delta w}$ &2e-13 & 6e-12 & 1e-11 & 5e-10 & 6e-9 & 1e-8 & 2e-7\\ \midrule
        $\min_{k\neq \widehat{k}}\abs{z_{k}-\widehat{\lambda}}$&9e-4 & 9e-5 & 9e-6 & 9e-7 & 9e-8 & 9e-9 & 9e-10
\\ \bottomrule
    \end{tabular}
\end{table}

\subsection{Forward stability of rootfinding}
The backward stability result shows that the computed roots are roots of another rational function $\widehat{q}$, whose weights in the barycentric form are close to the original function $q$. The following forward stability result illustrates how perturbations in the weights influence the accuracy of their roots.
\begin{theorem}
    \label{thm:fs}
    Let $q(z)$ and $\widehat{q}(z)$ be two rational functions in barycentric forms:
    \begin{equation*}
        q(z) = \sum_{k=1}^{K}\frac{w_{k}}{z-z_{k}}
        \quad\text{and}\quad 
        \widehat{q}(z) = \sum_{k=1}^{K}\frac{\widehat{w}_{k}}{z-z_{k}},
    \end{equation*}
    where $\abs{\widehat{w}_{k}-w_{k}}=\order(\mpr)$ for all $1\leq k\leq K$ and $\sum_{k=1}^{K}\abs{w_{k}}^{2}=1$. Let $\lambda_{i}$ be a root of $q$ with multiplicity $m_{i,1}$. Then there exist $m_{i,1}$ roots $\widehat{\lambda}_{i,j}$ of $\widehat{q}$, such that 
    \begin{equation*}
        \widehat{\lambda}_{i,j}=\lambda_{i}+\order(\kappa_{i}\mpr^{1/m_{i,1}}), 
    \end{equation*} 
    where $ \kappa_{i}$ is defined via 
    \begin{equation*}
        \kappa_{i}^{-m_{i,1}} \defi \min_{1\leq k\leq K}\abs{\lambda_{i}-z_{k}}\cdot\Bigabs{\sum_{k=1}^{K}\frac{w_{k}}{(\lambda_{i}-z_{k})^{m_{i,1}+1}}} = \min_{1\leq k\leq K}\abs{\lambda_{i}-z_{k}}\cdot
        \Bigabs{\frac{q^{(m_{i,1})}(\lambda_{i})}{m_{i,1}!}}
        .
    \end{equation*}
\end{theorem}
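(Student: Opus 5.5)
The plan is a Rouché argument on a small disk around $\lambda_{i}$. The perturbation $\widehat{q}-q$ is bounded relative to the distance to the support points, while $q$ is controlled from below by its leading Taylor term $q^{(m_{i,1})}(\lambda_{i})(z-\lambda_{i})^{m_{i,1}}/m_{i,1}!$. Write $m=m_{i,1}$, $d=\min_{k}\abs{\lambda_{i}-z_{k}}$ and $c=\abs{q^{(m)}(\lambda_{i})}/m!$, so that $\kappa_{i}^{-m}=dc$. The aim is to show that $\widehat{q}$ has exactly $m$ zeros in the disk $\abs{z-\lambda_{i}}\leq\rho$ with $\rho\defi C\kappa_{i}\mpr^{1/m}$ for a modest constant $C$, under the implicit assumption that $\rho\leq d/2$. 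If this assumption fails, the bound is not meaningful anyway, and the $\order(\cdot)$ statement can be read as asymptotic in $\mpr$.

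\textbf{Perturbation bound.} For $\abs{z-\lambda_{i}}\leq d/2$ we have $\abs{z-z_{k}}\geq\abs{\lambda_{i}-z_{k}}/2$. Cauchy--Schwarz then gives
\begin{equation*}
\abs{\widehat{q}(z)-q(z)}\leq\sum_{k}\frac{\abs{\widehat{w}_{k}-w_{k}}}{\abs{z-z_{k}}}\leq 2\,\order(\mpr)\sum_{k}\frac{1}{\abs{\lambda_{i}-z_{k}}}.
\end{equation*}
This is awkward: it does not reduce to $\order(\mpr)/d$ without a factor involving $K$ or the spread of the $z_{k}$. I would absorb this into the hidden constant, using the normalisation $\sum\abs{w_{k}}^{2}=1$ and treating $K$ as fixed. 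The cleaner route is to multiply through by $(z-z_{\widehat{k}})$, where $z_{\widehat{k}}$ is the support point nearest $\lambda_{i}$, exactly as in the proof of \cref{thm:bs}. Set $g(z)\defi(z-z_{\widehat{k}})q(z)$ and $\widehat{g}(z)\defi(z-z_{\widehat{k}})\widehat{q}(z)$. These are holomorphic near $\lambda_{i}$ and have the same zeros there as $q$ and $\widehat{q}$. Moreover $\abs{\widehat{g}-g}=\order(\mpr)$ on the disk, provided the other support points are at distance $\Theta(1)$ (or with constants depending on the configuration).

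\textbf{Lower bound on $g$.} Since $q(\lambda_{i})=\dots=q^{(m-1)}(\lambda_{i})=0$, the function $g$ has a zero of order $m$ at $\lambda_{i}$. Its leading coefficient is $(\lambda_{i}-z_{\widehat{k}})q^{(m)}(\lambda_{i})/m!$, whose modulus is $d\,c=\kappa_{i}^{-m}$; this explains the definition of $\kappa_{i}$. Writing $g(z)=\kappa_{i}^{-m}e^{\mathrm{i}\theta}(z-\lambda_{i})^{m}(1+\eta(z))$, I need $\abs{\eta}\leq 1/2$ on $\abs{z-\lambda_{i}}\leq\rho$. I would bound the Taylor remainder of $g$ by Cauchy estimates on the disk of radius $d/2$, where $\abs{g}$ is bounded by a constant times $\sum\abs{w_{k}}\abs{z-z_{\widehat{k}}}/\abs{z-z_{k}}=\order(1)$. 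This gives a remainder of size $\order\bigl((\rho/d)^{m+1}\cdot d^{-?}\bigr)$, which is negligible compared with $\kappa_{i}^{-m}\rho^{m}$ once $\rho$ is small. Controlling this remainder uniformly is the main obstacle. I would handle it by treating the statement as asymptotic in $\mpr$, that is, taking $\mpr$ small enough that $\rho$ lies below a threshold determined by $g$.

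\textbf{Conclusion.} On the circle $\abs{z-\lambda_{i}}=\rho$ we then have $\abs{g}\geq\tfrac12\kappa_{i}^{-m}\rho^{m}=\tfrac12 C^{m}\mpr$. This exceeds $\abs{\widehat{g}-g}=\order(\mpr)$ once $C$ is large enough. Rouché's theorem then shows that $\widehat{g}$, and hence $\widehat{q}$, has exactly $m$ zeros $\widehat{\lambda}_{i,j}$ in the disk. Each therefore satisfies $\abs{\widehat{\lambda}_{i,j}-\lambda_{i}}\leq\rho=\order(\kappa_{i}\mpr^{1/m})$.
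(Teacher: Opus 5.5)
Your proof is correct, but it takes a different route from the paper. The paper never estimates $q$ directly. It works with the arrowhead pencil \cref{eq:defgevp} and exhibits a Jordan chain showing that $\lambda_{i}$ is a nonderogatory pencil eigenvalue of algebraic multiplicity $m_{i,1}$. It then writes down left and right eigenvectors $x_{L},x_{R}$, with $x_{L}$ scaled by $\kappa_{i}^{m_{i,1}}\min_{k}\abs{\lambda_{i}-z_{k}}$ to meet the Jordan-chain normalization. Finally it invokes first-order (Lidskii-type) perturbation theory for defective eigenvalues, using that a change of weights perturbs only the first row of the pencil. The resulting bound $(\abs{e_{1}^{\Ttran}x_{L}}\norm{x_{R}}\mpr)^{1/m_{i,1}}$ becomes $\kappa_{i}\mpr^{1/m_{i,1}}$ once $d\norm{x_{R}}\leq\sqrt{d^{2}+K}$ is absorbed into the constant.

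Your factor $(z-z_{\widehat{k}})$ is the scalar counterpart of that scaling. The fact that the leading Taylor coefficient of $g$ has modulus $\kappa_{i}^{-m}$ is the same identity that fixes the paper's normalization of $x_{L}$.

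Your route buys the following. It is elementary and self-contained. It gives a genuine enclosure (exactly $m$ roots in an explicit disk) rather than a first-order expansion with an unquantified higher-order remainder. Its validity threshold can also be made explicit: $\abs{g}\leq 3\sum_{k}\abs{w_{k}}\leq 3\sqrt{K}$ on $\abs{z-\lambda_{i}}\leq d/2$. So the Taylor remainder on $\abs{z-\lambda_{i}}=\rho$ is at most $3\sqrt{K}(2\rho/d)^{m+1}/(1-2\rho/d)$, and no extra power of $d$ appears.

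The paper's route buys something different. It ties the result to how the roots are actually computed, namely a backward stable eigensolver on the pencil (cf.\ \cref{thm:bs}). It identifies $\kappa_{i}$ as the condition number of a defective pencil eigenvalue under first-row perturbations. It also describes the leading-order behaviour of the $m$ perturbed roots (the $m$ branches of an $m$-th root) rather than only bounding them.

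One small correction: your proviso that the other support points lie at distance $\Theta(1)$ is unnecessary. Since $z_{\widehat{k}}$ is the nearest support point, on $\abs{z-\lambda_{i}}\leq d/2$ you have $\abs{z-z_{\widehat{k}}}\leq 3d/2\leq 3\abs{z-z_{k}}$ for every $k$. Hence $\abs{\widehat{g}-g}\leq 3\sum_{k}\abs{\widehat{w}_{k}-w_{k}}$ for any configuration. The only hidden dependence is on $K$, the same dependence the paper hides in $\norm{x_{R}}$. For the same reason, comparing $\widehat{q}$ with $q$ directly is not actually worse: $\abs{\widehat{q}-q}\leq 2\sum_{k}\abs{\widehat{w}_{k}-w_{k}}/d$, and $\kappa_{i}^{-m}=dc$ gives the same $\kappa_{i}$.
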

\begin{proof} 
    When $\lambda_{i}$ is a root of multiplicity $m_{i,1}$, for $1\leq j\leq m_{i,1}$, it holds that 
    \begin{equation*}
        \sum_{k=1}^{K}\frac{w_{k}}{(\lambda_{i}-z_{k})^{j}} =(-1)^{j-1}\cdot \frac{q^{(j-1)}(\lambda_{i})}{(j-1)!}= 0.
    \end{equation*}
    Recalling that roots of $q$ are generalized eigenvalues of \cref{eq:defgevp}, we have 
        \begin{small}
    \begin{equation*}
        \begin{aligned}
        &\begin{bmatrix}
        0 & w_{1} & w_{2} & \cdots & w_{K}\\ 
        1 & z_{1}&&& \\ 
        1& & z_{2}&&\\ 
        \vdots &&&\ddots &\\ 
        1 &&&&z_{K}
    \end{bmatrix}
    \begin{bmatrix}
        1 &0&\cdots &0\\ 
        (\lambda_{i}-z_{1})^{-1}&(\lambda_{i}-z_{1})^{-2}&\cdots &(\lambda_{i}-z_{1})^{-m_{i,1}}\\
        \vdots&\vdots&\ddots &\vdots \\ 
        (\lambda_{i}-z_{K})^{-1}&(\lambda_{i}-z_{K})^{-2}&\cdots &(\lambda_{i}-z_{K})^{-m_{i,1}}
    \end{bmatrix}
    \\ =&
    \begin{bmatrix}
        0&\\ 
        & I
    \end{bmatrix}
    \begin{bmatrix}
        1 &0&\cdots &0\\ 
        (\lambda_{i}-z_{1})^{-1}&(\lambda_{i}-z_{1})^{-2}&\cdots &(\lambda_{i}-z_{1})^{-m_{i,1}}\\
        \vdots&\vdots&\ddots &\vdots \\ 
        (\lambda_{i}-z_{K})^{-1}&(\lambda_{i}-z_{K})^{-2}&\cdots &(\lambda_{i}-z_{K})^{-m_{i,1}}
    \end{bmatrix}
    \begin{bmatrix}
        \lambda_{i} & -1 &&&\\ 
        &\lambda_{i} & -1 &&\\ 
        &&\ddots &\ddots &\\ 
        &&&\lambda_{i} & -1\\ 
        &&&&\lambda_{i}
    \end{bmatrix},
        \end{aligned}
    \end{equation*}
        \end{small}
    implying that $\lambda_{i}$ has geometric multiplicity $1$. Denote the left and right (normalized) eigenvectors corresponding to $\lambda_{i}$ by 
    \begin{equation*}
        \begin{aligned}
        x_{L}^{\Htran} &= \kappa_{i}^{m_{i,1}}\cdot \min_{1\leq k\leq K}\abs{\lambda_{i}-z_{k}}\cdot \bigl[1,\frac{w_{1}}{\lambda_{i}-z_{1}},\dotsc,\frac{w_{K}}{\lambda_{i}-z_{K}}\bigr],\\ 
        x_{R} &= \bigl[1,\frac{1}{\lambda_{i}-z_{1}},\dotsc,\frac{1}{\lambda_{i}-z_{K}}\bigr]^{\Ttran},
        \end{aligned}
    \end{equation*}
    where the normalization comes from imposing that 
    \begin{equation*}
        \left|{x_{L}^{\Htran}\begin{bmatrix}
            0&&&\\ 
            &1&&\\ 
            &&&\ddots&\\ 
            &&&&1
        \end{bmatrix}\begin{bmatrix}
            0\\ (\lambda_{i}-z_{1})^{-m_{i,1}}\\ 
            \vdots\\ (\lambda_{i}-z_{K})^{-m_{i,1}}
        \end{bmatrix}}\right|=1. 
    \end{equation*}
    Then we can use the first order perturbation theory for defective (generalized) eigenvalues in \cite{Kressner2009,DeTeran2008} to know that there are $m_{i,1}$ (generalized) eigenvalues $
        \widehat{\lambda}_{i}$ of \cref{eq:defgevp} with $w_{k}$ replaced by $\widehat{w}_{k}$, such that  
    \begin{equation*}
        \widehat{\lambda}_{i,j}=\lambda_{i} + \order\bigl((\abs{e_{1}^{\Ttran}x_{L}}\norm{x_{R}}\mpr)^{1/m_{i,1}}\bigr) =\lambda_{i} + \order\bigl(\kappa_{i}\mpr^{1/m_{i,1}}\bigr).
    \end{equation*}
    Here we use the fact that the perturbation on the matrix pencil \cref{eq:defgevp} only happens in the first row.
    The proof is completed by noticing that $\widehat{\lambda}_{i}$ are also roots of $\widehat{q}$. 
\end{proof}

\cref{thm:fs} suggests that, for \cref{algo}, the best forward accuracy we can expect is $\order(\mpr^{1/m_{i,1}})$, where $m_{i,1}$ is the maximum partial algebraic multiplicity corresponding to the eigenvalue $\lambda_{i}$. For example, consider the following function:
\begin{equation*}
    q_{m}(z) \defi \frac{2^{m+1}z^{2^{m}}}{z^{2^{m+1}}-1} = \sum_{k=1}^{2^{m+1}} \frac{(\omega_{m}^{k})^{2^{m}+1}}{z-\omega_{m}^{k}},\quad\text{where}\quad \omega_{m} = \exp(2^{-m}\pi\mi),
\end{equation*}
containing a root $0$ with multiplicity $2^{m}$. Taking $m=0,1,2,3$, we collect numerical results in \cref{tab:fs}, verifying the forward stability in \cref{thm:fs}. 
This result is in line with the theoretical accuracy limits of classical dense eigensolvers, such as QR and QZ, which are also known to suffer from reduced accuracy in the presence of defective eigenvalues.
\begin{table}[htbp]
    \caption{Forward error of rootfinding.}
    \label{tab:fs}
    \centering
    \begin{tabular}{l|cccc}
        \toprule
        $m$ &$0$& $1$ & $2$ &$3$ \\ \midrule
        $\min_{i}\abs{\widehat{\lambda}_{i}}$ &2e-16 & 8e-9 & 1e-4 & 1e-2\\ \midrule
        $\mpr^{2^{-m}}$&2e-16 & 1e-8 & 1e-4 & 1e-2
\\ \bottomrule
    \end{tabular}
\end{table}

\section{Concluding remarks}
In this paper, we analyzed the convergence of a nonlinear eigensolver based on rational approximations of the resolvent. In particular, we justified two techniques that improve accuracy: block probing and zooming in.

Several implementation-related questions remain for future work. These include developing robust methods to detect (partial) algebraic multiplicities from \cref{thm:sval}, and designing effective sampling strategies informed by \cref{thm:singlepole}. In addition, \cref{thm:bs}, together with the numerical experiments, indicates that polefinding in the AAA algorithm can be backward unstable when the sample points are poorly distributed. It would therefore be of interest to develop effective preprocessing strategies to mitigate this issue.

\section*{Acknowledgments}
The authors thank Nick Trefethen for helpful discussions on this work.
Shao thanks the Mathematical Institute at the University of Oxford for hosting him during a research visit in 2025, when this work was undertaken.
\bibliographystyle{siamplain}

\end{document}